\documentclass[11pt]{amsart}

\usepackage[a4paper,margin=1.05in]{geometry}
\usepackage{amsmath,amssymb,amsthm,mathtools}
\usepackage[expansion=false]{microtype}
\usepackage[hidelinks]{hyperref}
\usepackage[nameinlink,capitalize]{cleveref}
\usepackage{color}

\numberwithin{equation}{section}
\allowdisplaybreaks

\newtheorem{theorem}{Theorem}[section]
\newtheorem{proposition}[theorem]{Proposition}
\newtheorem{lemma}[theorem]{Lemma}

\theoremstyle{definition}

\theoremstyle{remark}

\newcommand{\Sym}{\operatorname{Sym}}
\newcommand{\tr}{\operatorname{tr}}
\newcommand{\co}{\operatorname{co}}
\newcommand{\R}{\mathbb R}
\newcommand{\cQ}{\mathcal Q}
\newcommand{\cR}{\mathcal R}
\newcommand{\cF}{\mathcal F}

\title{Interior \(C^{2,\alpha}\) Regularity for Convex Solutions of the
2-Hessian Equation}
\author{Xingchen Zhou \& Ruixuan Zhu}

\address{Xingchen Zhou, 
School of Mathematics and Statistics, Hainan University, Haikou, 570228, PR China.}
\email{zxc3zxc4zxc5@stu.xjtu.edu.cn}

\address{Ruixuan Zhu, 
Institute for Theoretical Sciences, Westlake University, Hangzhou, 310030, China.}
\email{zhuruixuan@westlake.edu.cn}

\date{}

\begin{document}

\begin{abstract}
We consider convex admissible viscosity solutions of
\(\sigma_2(D^2u)=f>0\) in an open subset of \(\mathbb R^n\), where
\(n\ge2\), \(0<\alpha<1\), and
\(f\in C_{\mathrm{loc}}^{0,\alpha}\).
We prove that every such solution belongs to
\(C_{\mathrm{loc}}^{2,\alpha}\).
For solutions in \(B_2\), we also prove a uniform \(C^{2,\alpha}(B_{1/4})\) estimate under an \(L^\infty\) bound for \(u\), a positive lower bound for \(f\), and a \(C^{0,\alpha}\) bound for \(f\).
\end{abstract}

\subjclass[2020]{35B65}
\keywords{2-Hessian equation, interior \(C^{2,\alpha}\) regularity}

\maketitle

\section{Introduction}

In this paper, we study the interior Schauder estimates for the convex admissible solutions of the 2-Hessian equation
\[
 \sigma_2(D^2u)=f(x) \quad\text{ in } \Omega\subset \mathbb{R}^n.
\]
where we assume that $f$ is positive and in $C^{\alpha}(\Omega)$. Our main result is stated as follows.
\qquad
\begin{theorem}
\label{main}
Let  \(n\ge2\), \(0<\alpha<1\) be two constants, and let
\(\Omega\subset\mathbb R^n\) be open.  Suppose that \( u\in C(\Omega)\)
is convex and is an admissible viscosity solution of
\[
 \sigma_2(D^2u)=f(x)
 \qquad\text{in }\Omega,
\]
where
\[
 f\in C_{\mathrm{loc}}^{0,\alpha}(\Omega),
 \qquad
 f>0\quad\text{in }\Omega.
\]
Then
\[
 u\in C_{\mathrm{loc}}^{2,\alpha}(\Omega).
\]
If, in addition, \(\Omega=B_2\) and
\[
 u\in L^\infty(B_2),\qquad
 f^{-1}\in L^\infty(B_2),\qquad
 f\in C^{0,\alpha}(B_2),
\]
then
\begin{equation}
 \|u\|_{C^{2,\alpha}(B_{1/4})}
 \le
 C\!\left(
 n,\alpha,\|u\|_{L^\infty(B_2)},
 \|f^{-1}\|_{L^\infty(B_2)},
 \|f\|_{C^{0,\alpha}(B_2)}
 \right).
 \label{main-est}
\end{equation}
\end{theorem}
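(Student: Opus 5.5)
The plan is to reduce the statement to a Pogorelov-type interior $C^{1,1}$ bound for convex solutions of $\sigma_2$, and then invoke Evans--Krylov/Caffarelli theory. The reduction rests on two facts. Once $D^2u$ is bounded, the operator $\sigma_2^{1/2}$ (equivalently $\log\sigma_2$) is uniformly elliptic and concave on the admissible set. For \emph{convex} solutions there is moreover a matching lower bound on $D^2u$: since $D^2u\ge0$ and $\sigma_2(D^2u)=f\ge f_{\min}>0$, an upper bound $|D^2u|\le K$ forces each eigenvalue pair to satisfy $\lambda_i\lambda_j\le K^2$. Together with $\sum_{i<j}\lambda_i\lambda_j=f$, this keeps the linearized coefficients $\sigma_1(D^2u)I-D^2u$ uniformly positive definite, since $\sigma_1-\lambda_i\ge \lambda_j$ and $\sigma_1(\sigma_1-\lambda_i)\ge \sigma_2$.

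Given the $C^{1,1}$ bound, Caffarelli's interior $C^{2,\alpha}$ theorem for concave uniformly elliptic equations with $C^\alpha$ right-hand side applies to viscosity solutions of $F(D^2u)=f^{1/2}$ with $F=\sigma_2^{1/2}$ extended concavely outside the relevant compact range. It yields
\[
\|u\|_{C^{2,\alpha}(B_{1/4})}\le C\bigl(\|D^2u\|_{L^\infty(B_{1/2})},\|f\|_{C^\alpha},f_{\min}\bigr).
\]
Applied on small balls, this also gives the local statement.

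The core of the argument is therefore an interior estimate
\[
\|D^2u\|_{L^\infty(B_{1/2})}\le C\bigl(n,\|u\|_{L^\infty(B_2)},\|f^{-1}\|_\infty,\|f\|_{C^\alpha}\bigr)
\]
for convex solutions with merely H\"older $f$. I would establish it in four steps.

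\textbf{Step 1: smoothing.} Mollify $f$ to $f_\varepsilon$ and solve the Dirichlet problem $\sigma_2(D^2u_\varepsilon)=f_\varepsilon$ on a slightly smaller strictly convex domain, with boundary data $u$ (smoothed). Convexity of $u$ and the comparison principle give $u_\varepsilon\to u$ locally uniformly.

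\textbf{Step 2: convexity of the approximants.} The difficulty is that $u_\varepsilon$ must remain convex. I would try either a constant rank theorem for $\sigma_2$, which requires $f_\varepsilon^{-1/2}$ to be convex and fails in general, or, more robustly, avoid this altogether and prove the $C^{1,1}$ bound in a form depending only on $\sigma_2$-admissibility together with a lower bound $D^2u\ge -\delta I$ that is preserved in the limit.

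\textbf{Step 3: the Hessian bound.} Since a Pogorelov estimate with $C^\alpha$ data cannot differentiate $f$ twice, I would instead use a perturbation/compactness scheme in the spirit of Caffarelli's $W^{2,p}$ and $C^{2,\alpha}$ theory. Convexity plus the $L^\infty$ bound gives a Lipschitz bound. At each point, compare $u$ on sections $S_h(x_0)$ with the solution $w$ of $\sigma_2(D^2w)=f(x_0)$ having the same boundary values. For constant right-hand side, convex solutions satisfy the Pogorelov/Warren--Yuan-type interior $C^{1,1}$ estimate; in dimensions where that is only known under semi-convexity, convexity of $w$ supplies it. The difference $u-w$ is controlled by the ABP estimate as $Ch^{\cdot}\,\mathrm{osc}_{S_h}f$. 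Iterating over dyadic scales, the H\"older decay of $f$ gives geometric approximation of $u$ by quadratics. This yields pointwise $C^{2,\alpha}$ directly, provided one has uniform ellipticity at each scale. That uniformity comes from the Hessian bound for $w$ combined with the convex lower bound explained in the first paragraph.

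\textbf{Step 4: control of the sections.} The main obstacle is ensuring that the sections $S_h(x_0)$ are comparable to balls, i.e.\ strict convexity of $u$ with a quantitative modulus. Without this, Pogorelov on $w$ degenerates. I would first prove this via the observation that $\sigma_2(D^2u)\ge f_{\min}$ forces $\Delta u\ge c>0$. A convex solution vanishing on a line segment would then contradict Alexandrov-type maximum principles for the subequation $\sigma_2\ge c$, since $\sigma_2$-admissible functions cannot be affine on segments extending to the boundary of a section where $u$ has positive height. This yields uniform Lipschitz-controlled section geometry, which closes the iteration and gives \eqref{main-est}.
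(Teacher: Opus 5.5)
\textbf{There is a genuine gap: the interior $C^{1,1}$ bound is never actually obtained, and Step 4 is false.} Your reduction to a $C^{1,1}$ bound is sound. So is your final step: uniform ellipticity of $\sqrt{\sigma_2}$ on $\{0\le A\le KI,\ \sigma_2(A)\ge\lambda\}$, a concave extension, then Evans--Krylov and Schauder. That matches the end of the paper. But the $C^{1,1}$ bound is the entire difficulty, and Steps 1--4 do not produce it.

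For $n\ge3$, convex solutions need not be strictly convex. The function $u=\tfrac12(x_1^2+x_2^2)$ is smooth and convex with $\sigma_2(D^2u)=1$. Yet it agrees with its tangent plane at $0$ on the $(n-2)$-plane $\{x_1=x_2=0\}$. So $S_h(0)$ has width $\sqrt{2h}$ in two directions and width of order one in the others.

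\begin{itemize}
\item The heuristic via $\Delta u\ge c$ cannot help: $x_2^2$ has Laplacian $2$ and vanishes on a line. The correct statement is Mooney's strict $2$-convexity, which only rules out contact sets of dimension $\ge n-1$.
\item Unlike $\det$, $\sigma_2$ is not affine invariant, so eccentric sections cannot be renormalized. The comparison solutions $w$ on $S_h$ therefore get no uniform Pogorelov bound. Their convexity, which you invoke, is also not established.
\item Consequently the dyadic iteration in Step 3 has no uniform constants. Keeping the Hessians of the successive approximating quadratics bounded is equivalent to the very estimate you are trying to prove.
\item Step 2 is left open as well. The Dirichlet approximants are only $2$-admissible, and you give no reason for them to be convex or semiconvex. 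No interior $C^{1,1}$ estimate is known for merely admissible solutions with $C^\alpha$ data.
\end{itemize}

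\textbf{How the paper avoids this.} It needs neither a priori Hessian bounds nor any control of section geometry.
\begin{enumerate}
\item It takes the local Legendre--Lewy transform $w$ of $u-\ell+\tfrac{\kappa}{2}|x|^2$ on a small ball, where $\ell$ is a supporting affine function. This $w$ is automatically $C^{1,1}$ with $0\le D^2w\le\kappa^{-1}I$.
\item $w$ solves the quotient equation $\kappa(n-1)\sigma_{n-1}/\sigma_{n-2}-1+a\,\sigma_n/\sigma_{n-2}=0$, whose Hessians stay in a compact subset of $\Gamma_{n-1}$. After extension the operator is concave and uniformly elliptic, and Fan's small-perturbation theorem gives $w\in C^{2,\alpha}$.
\item The only remaining obstruction is a singular $D^2w$. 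A strong maximum principle for an adjoint equation with Dini coefficient shows this would force an $(n-1)$-dimensional contact set for $u$. That is exactly what Mooney's theorem forbids, so the $(n-2)$-dimensional flatness in the example above is harmless there.
\item The quantitative bound on $D^2u$ comes from compactness applied to a uniform lower bound for $D^2w(0)$, not from a Pogorelov-type estimate.
\end{enumerate}
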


Notice that when \(n=2\), the equation is the
Monge--Amp\`ere equation, and the conclusion follows from the interior regularity
theory of Caffarelli and the continuity estimates of Jian and Wang
\cite{CaffarelliMA,JianWang}.

\vspace{0.7\baselineskip}

We first introduce the classical results for $k$-Hessian
equations.  Caffarelli, Nirenberg, and Spruck  \cite{CNS} established the classical
Dirichlet theory on the admissible cone.  Urbas \cite{Urbas1990}
constructed none $C^{1,1}$ admissible solutions for \(\sigma_k\) when \(k\ge3\)
, thus general interior
regularity fails when $k\ge 3$. Chou and Wang \cite{ChouWang} proved a Pogorelov estimate in
the presence of a strict \(k\)-convex barrier function. 

\vspace{0.7\baselineskip}

For smooth solutions of the \(\sigma_2\) equation, Warren and Yuan  \cite{WarrenYuan} used the special Lagrangian structure to derive Hessian estimates 
for the constant right-hand side in
dimension three. Later Qiu \cite{Qiu} treated the
$C^2$ variable right-hand sides case, and Xu
proved a H\"older estimate when the $C^\alpha$ norm of the
right-hand side is small \cite{Xu}.  In dimension four, Shankar and
Yuan \cite{ShankarYuanFour} recently obtained the unconditional estimate for the constant equation
and derived regularity for viscosity solutions
.  Fan extended the four-dimensional estimate
to positive \(C^{1,1}\) right-hand sides \cite{FanVariable}.  

\vspace{0.7\baselineskip}

In general dimensions, Guan and Qiu \cite{GuanQiu} proved Hessian estimates for
smooth solutions under a convex  restriction
\(\sigma_3\).  McGonagle, Song, and Yuan \cite{McGonagleSongYuan} used
compactness argument for constant right-hand side and almost convex smooth
solutions.  Shankar and Yuan \cite{ShankarYuanSemiconvex} obtained
estimates for semiconvex smooth solutions. They also proved regularity for almost convex
viscosity solutions with the constant right hand side
\cite{ShankarYuanAlmostConvex}.  Li and Wu \cite{LiWu} proved the Hessian estimates in general dimensions $n\ge 2$.  However, these
a priori estimates do not automatically imply interior $C^{2,\alpha}$ regularity with $C^{\alpha}$ right hand side.

\vspace{0.7\baselineskip}

A new progress was by Mooney \cite{Mooney}, which proved strict \(2\)-convexity and
interior smoothness for convex solutions with constant right-hand side
.  Very recently,  Chen, Jian, Tu, and Zhou \cite{ChenJianTuZhou} developed a  boundary Jacobi approach to
established interior \(C^2\) regularity for convex viscosity solutions
with positive Lipschitz right-hand side, with the help of Mooney's 2-convex barrier
.  In view of Wang's none \(C^{1,1}\)  viscosity solution  to Monge–Amp\`ere equation which is automatically extended to $\sigma_2$ equations, the interested open case is when
\[
 f\in C^\alpha,\qquad 0<\alpha<1.
\]

The proof uses ideas from two related lines of work.  Legendre--Lewy
transforms were used in the quadratic Hessian estimates of
McGonagle--Song--Yuan and Shankar--Yuan
\cite{ChangYuan,McGonagleSongYuan,ShankarYuanSemiconvex}.  Partial Legendre
transforms and constant-rank arguments provide precedents for the
degeneracy analysis below
\cite{GuanPhong,CaffarelliGuanMa,SzekelyhidiWeinkove}.  For the local
regularity step, we use Fan's nonhomogeneous small-perturbation theorem
\cite{Fan}, which extends Savin's theorem \cite{Savin}.  Related
pointwise criteria for locally uniformly elliptic equations were
developed by Lian and Zhang under explicit smallness assumptions
\cite{LianZhang}.

\vspace{0.7\baselineskip}

There are two main difficulties.  First, the linearized operator of
\(\sigma_2\) need not be uniformly elliptic before \(D^2u\) is
bounded.  Thus neither Evans--Krylov theory nor a standard Schauder
estimate can be applied directly to the original equation.  Second,
the Legendre transform used to restore uniform ellipticity may have a
singular Hessian.  H\"older continuity of that Hessian only does not permit inversion of the transform.

\vspace{0.7\baselineskip}

We resolve the first difficulty locally.  At an interior point, we
subtract a supporting affine function from \(u\), add
\(\kappa|x|^2/2\), and take the Legendre transform on a small ball.
The resulting function \(w\) satisfies
\[
 w\in C^{1,1},\qquad 0\le D^2w\le\kappa^{-1}I.
\]
The \(\sigma_2\) equation becomes a Hessian quotient equation for
\(w\).  Its Hessians lie in a compact subset of \(\Gamma_{n-1}\),
where \(\sigma_{n-1}/\sigma_{n-2}\) is concave and strictly elliptic.
We extend this quotient to a concave uniformly elliptic operator on
all symmetric matrices.  After reducing the ball, the
oscillation of \(f\) is small enough for Fan's perturbation theorem,
and we obtain \(w\in C^{2,\alpha}\). 

\vspace{0.7\baselineskip}

It remains to rule out a zero eigenvalue of \(D^2w\).  The dual
equation first forces any singular Hessian to have a one-dimensional
kernel, with all other eigenvalues equal to \(\kappa^{-1}\).  
Two changes of variables isolate the degenerate direction and produce
a nonnegative function \(Z\).  This function vanishes exactly when
the dual Hessian is singular and satisfies a linear adjoint equation.  
The Hölder continuity of \(f\) gives the coefficient a Dini modulus
at the base point.  The strong maximum principle then shows that if
\(Z\) vanishes there, it vanishes in a whole neighborhood.
The resulting splitting of \(w\) produces,
by Fenchel duality, an \((n-1)\)-dimensional contact set between \(u\)
and a supporting affine function.  Mooney's strict \(2\)-convexity
theorem excludes such a contact set.  Hence \(D^2w>0\), and the inverse
Legendre transform gives \(u\in C^{2,\alpha}\). 

\vspace{0.7\baselineskip}

The qualitative proof does not give a lower bound for the smallest
eigenvalue of \(D^2w\) in terms of the data.  In Section~6, compactness
converts the strict positivity at the center of each localized dual
transform into a uniform lower bound.  The second-order duality
relation then gives a uniform bound for \(D^2u\) on \(B_{1/2}\).  On
this bounded Hessian range, \(\sqrt{\sigma_2}\) is concave and
uniformly elliptic.  The standard interior Schauder estimate completes
\eqref{main-est}.

\vspace{0.7\baselineskip}

The paper is organized as follows.  Section~2 constructs the localized
Legendre transform, derives the dual equation, and extends it to a
uniformly elliptic viscosity equation.  Section~3 proves
\(C^{2,\alpha}\) regularity of the dual function using Fan's
small-perturbation theorem.  Section~4 rules out singular dual
Hessians by a strong maximum principle and Mooney's strict
\(2\)-convexity theorem.  Section~5 inverts the Legendre transform and
completes the qualitative proof.  Finally, Section~6 uses compactness
to obtain a uniform lower bound for the dual Hessian and proves the
quantitative estimate.

\section{The local Legendre transform}

We construct the local Legendre transform used in the proof. 
The dual function satisfies a uniformly elliptic equation almost-everywhere. By the $C^{1,1}$ regularity, it is also a viscosity solution of the equation.
This Legendre--Lewy type transformation was also used in
\cite{ChangYuan,McGonagleSongYuan,ShankarYuanSemiconvex}.

\subsection{The localized transform}

For \(M\in\Sym(n)\), let \(\sigma_k(M)\) be the \(k\)-th elementary
symmetric function of its eigenvalues, with \(\sigma_0=1\), and set
\[
 \Gamma_k
 =
 \{M\in\Sym(n):\sigma_j(M)>0\text{ for }1\le j\le k\}.
\]
A function \(\varphi\in C^2\) is admissible at \(x_0\) if
\(
D^2\varphi(x_0)\in\Gamma_2.
\)
A continuous function \(u\) is a viscosity subsolution of
\(
\sigma_2(D^2u)=f
\)
if every admissible function \(\varphi\) touching \(u\) from above at
\(x_0\) satisfies
\(
\sigma_2(D^2\varphi(x_0))\ge f(x_0).
\)
It is a viscosity supersolution if every admissible function
\(\varphi\) touching \(u\) from below at \(x_0\) satisfies
\(
\sigma_2(D^2\varphi(x_0))\le f(x_0).
\)
An admissible viscosity solution is both a subsolution and a
supersolution.

\begin{lemma}
\label{alex}
Let \(u\) be a convex viscosity solution of
\[
 \sigma_2(D^2u)=f
\]
in an open set, where \(f\) is continuous and positive.  If \(u\) is
twice differentiable at \(x_0\), with Alexandrov Hessian \(A\), then
\[
 A\in\Gamma_2,
 \qquad
 \sigma_2(A)=f(x_0).
\]
\end{lemma}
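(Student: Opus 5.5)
We plan to test the viscosity inequalities with quadratic polynomials built from the second-order Taylor expansion at \(x_0\). Since \(u\) is convex, \(A\ge 0\), and twice differentiability means
\[
 u(x)=u(x_0)+p\cdot(x-x_0)+\tfrac12(x-x_0)^TA(x-x_0)+o(|x-x_0|^2),
\]
with \(p\in\partial u(x_0)\). For \(\varepsilon>0\) we set
\[
 P_\varepsilon^\pm(x)=u(x_0)+p\cdot(x-x_0)+\tfrac12(x-x_0)^T(A\pm\varepsilon I)(x-x_0).
\]
Then \(P_\varepsilon^+\) touches \(u\) from above at \(x_0\) in a small ball (strictly, after shrinking the ball), and \(P_\varepsilon^-\) touches from below. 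If the viscosity definition requires global touching, we localize; the definition is local, so this is routine.

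\emph{Subsolution side.} Because \(A\ge0\), we have \(A+\varepsilon I>0\), so \(A+\varepsilon I\in\Gamma_2\) and \(P_\varepsilon^+\) is admissible. The subsolution property gives \(\sigma_2(A+\varepsilon I)\ge f(x_0)\). Letting \(\varepsilon\to0\) and using continuity of \(\sigma_2\), we get \(\sigma_2(A)\ge f(x_0)>0\). Since \(A\ge0\), \(\sigma_1(A)\ge0\). If \(\sigma_1(A)=0\), then \(A=0\) and \(\sigma_2(A)=0\), a contradiction. Hence \(\sigma_1(A)>0\) and \(A\in\Gamma_2\). Convexity is used here to guarantee admissibility of the upper test function without knowing anything about \(A\) in advance.

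\emph{Supersolution side.} Since \(\Gamma_2\) is open and \(A\in\Gamma_2\), we have \(A-\varepsilon I\in\Gamma_2\) for \(\varepsilon\) small. So \(P_\varepsilon^-\) is an admissible test function touching \(u\) from below, and the supersolution property gives \(\sigma_2(A-\varepsilon I)\le f(x_0)\). Letting \(\varepsilon\to0\) gives \(\sigma_2(A)\le f(x_0)\), hence equality.

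The only subtle point is the order of the argument. Admissibility of the lower test function is not automatic, so we must first establish \(A\in\Gamma_2\) from the upper side, and that relies on convexity together with \(f(x_0)>0\). The remaining step is to check that the \(o(|x-x_0|^2)\) error is absorbed by \(\pm\tfrac{\varepsilon}{2}|x-x_0|^2\) near \(x_0\), so that the touching is valid; this is immediate.
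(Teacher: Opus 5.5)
Your proposal is correct and follows essentially the same route as the paper: you test with $P_\varepsilon^\pm$, use the upper test (admissible because $A+\varepsilon I>0$) to get $\sigma_2(A)\ge f(x_0)>0$ and hence $A\in\Gamma_2$ via $A\ge0$, and then use openness of $\Gamma_2$ to make the lower test admissible and conclude equality. You are slightly more explicit than the paper in two places, namely checking admissibility of the upper test function and deducing $\sigma_1(A)>0$, but the argument is the same.
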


\begin{proof}
Let \(p=Du(x_0)\), then we have the Alexandrov expansion
\[
 u(x_0+h)
 =
 u(x_0)+p\cdot h+\frac12h^TAh+o(|h|^2).
\]
For every \(\varepsilon>0\), after restricting to a sufficiently
small ball, the two polynomials
\[
 P_\varepsilon^\pm(x_0+h)
 =
 u(x_0)+p\cdot h
 +\frac12h^T(A\pm\varepsilon I)h
\]
touch \(u\) from above and below, respectively, at \(x_0\).
The upper test and the viscosity subsolution inequality give
\[
 \sigma_2(A+\varepsilon I)\ge f(x_0).
\]
Letting \(\varepsilon\downarrow0\), we obtain
\(\sigma_2(A)\ge f(x_0)>0\).  Since \(u\) is convex, \(A\ge0\), and hence
\(\sigma_1(A)>0\) and \(A\in\Gamma_2\).  Since \(\Gamma_2\) is open,
\(A-\varepsilon I\in\Gamma_2\) for every sufficiently small
\(\varepsilon>0\).  The lower test is then admissible.  By the
viscosity supersolution inequality,
\[
 \sigma_2(A-\varepsilon I)\le f(x_0).
\]
Letting \(\varepsilon\downarrow0\) proves
\(\sigma_2(A)=f(x_0)\).
\end{proof}

We now fix the local setting used in the qualitative proof.  Let
\(u\in C(B_{2R})\) be a convex viscosity solution of
\[
 \sigma_2(D^2u)=f(x)
 \qquad\text{in }B_{2R},
\]
where
\begin{equation}
 f\in C^{0,\alpha}(B_{2R}),
 \qquad
 0<m\le f\le M.
 \label{fbounds}
\end{equation}
After subtracting a supporting affine function, we can assume
\begin{equation}
 u(0)=0,\qquad 0\in\partial u(0),\qquad u\ge0.
 \label{norm}
\end{equation}
Let
\begin{equation}
 c=\min_{\overline{B_R}}f,
 \qquad
 \binom n2\kappa^2=c,
 \qquad
 \Phi(x)=u(x)+\frac{\kappa}{2}|x|^2,
 \label{kappa}
\end{equation}
and define
\begin{equation}
 w(y)
 =
 \sup_{x\in\overline{B_R}}
 \{x\cdot y-\Phi(x)\}.
 \label{dualdef}
\end{equation}
We choose $R$ small such that
\begin{equation}
 R^\alpha[f]_{C^{0,\alpha}(B_{2R})}\le\varepsilon_*.
 \label{small}
\end{equation}
Here
\(\varepsilon_*=\varepsilon_*(n,\alpha,m,M)>0\) will be given in Section~\ref{prereg}.

\begin{proposition}
\label{locdual}
For every \(y\in B_{\kappa R/4}\), the supremum in
\eqref{dualdef} has a unique maximizer
\(x(y)\in B_R\).  Moreover, \(w\in C^{1,1}(B_{\kappa R/4})\) and
\begin{equation}
 Dw(y)=x(y),\qquad
 \operatorname{Lip}(Dw)\le\kappa^{-1},
 \qquad
 0\le D^2w\le\kappa^{-1}I
 \quad\text{a.e. in }B_{\kappa R/4}.
 \label{C11}
\end{equation}
In particular,
\begin{equation}
 Dw(B_{\kappa R/4})\subset B_R.
 \label{image}
\end{equation}
\end{proposition}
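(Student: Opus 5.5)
The plan is to exploit that \(\Phi=u+\frac{\kappa}{2}|x|^2\) is \(\kappa\)-strongly convex on \(B_{2R}\), and that \eqref{norm} gives \(\Phi(0)=0\) and \(\Phi(x)\ge\frac{\kappa}{2}|x|^2\). Every property in the statement should come from these two facts by elementary convex analysis.

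\emph{Localization of the maximizer.} Fix \(y\in B_{\kappa R/4}\). The function \(x\mapsto x\cdot y-\Phi(x)\) is continuous on the compact set \(\overline{B_R}\), so a maximizer exists. Comparing with \(x=0\) shows the maximal value is at least \(0\). On the other hand, for any \(x\) with \(x\cdot y-\Phi(x)\ge0\),
\[
 \tfrac{\kappa}{2}|x|^2\le\Phi(x)\le x\cdot y\le|x||y|,
\]
so \(|x|\le 2|y|/\kappa<R/2\). Hence every maximizer lies in \(B_{R/2}\subset B_R\), which also gives \eqref{image}. Uniqueness follows from strict convexity of \(\Phi\): if \(x_1\ne x_2\) both maximize, the midpoint gives a strictly larger value. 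Write \(x(y)\) for this maximizer.

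\emph{Subgradient relations.} The maximizer \(x(y)\) is an interior point of \(B_{2R}\), and \(x\cdot y-\Phi(x)\) is concave near \(x(y)\). A local maximum of this concave function is a global one along lines inside \(B_{2R}\), so \(y\in\partial\Phi(x(y))\), equivalently \(y-\kappa x(y)\in\partial u(x(y))\).

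Monotonicity of \(\partial u\) (as \(u\) is convex on \(B_{2R}\)), applied at \(y_1,y_2\) with \(x_i=x(y_i)\), gives
\[
 (y_1-y_2)\cdot(x_1-x_2)\ge\kappa|x_1-x_2|^2 .
\]
Hence \(|x(y_1)-x(y_2)|\le\kappa^{-1}|y_1-y_2|\).

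\emph{Differentiability and second derivatives.} The function \(w\) is a finite supremum of affine functions, so it is convex. For any \(z\in B_{\kappa R/4}\), using \(x(y)\) as a competitor in \eqref{dualdef} gives
\[
 w(z)\ge x(y)\cdot z-\Phi(x(y))=w(y)+x(y)\cdot(z-y),
\]
so \(x(y)\in\partial w(y)\). A convex function admitting a continuous selection of its subdifferential on an open set is differentiable there with gradient equal to that selection; alternatively, one can show \(\partial w(y)=\{x(y)\}\) directly by a Danskin-type argument using uniqueness of the maximizer. Thus \(Dw=x(\cdot)\), and this map is \(\kappa^{-1}\)-Lipschitz, so \(w\in C^{1,1}(B_{\kappa R/4})\).

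By Rademacher's theorem applied to \(Dw\), the Hessian \(D^2w\) exists a.e. Convexity of \(w\) gives \(D^2w\ge0\), and the Lipschitz bound gives \(D^2w\le\kappa^{-1}I\).

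\emph{Main obstacle.} The only delicate point is justifying \(y\in\partial\Phi(x(y))\) when \(\Phi\) is convex on \(B_{2R}\) but the supremum is taken only over \(\overline{B_R}\). This step is exactly where the interior localization \(|x(y)|<R/2\) from the first step is used. Everything else is routine.
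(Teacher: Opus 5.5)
Your proposal is correct and follows essentially the same route as the paper: you compare with the value \(0\) at \(x=0\), using \(u\ge0\), to place the maximizer in the open ball, get uniqueness from strong convexity of \(\Phi\), identify \(Dw(y)=x(y)\), and obtain the \(\kappa^{-1}\)-Lipschitz bound from strong monotonicity of the subgradients. One small remark: the step you flag as delicate is in fact automatic, because maximality over \(\overline{B_R}\) already gives \(\Phi(x)\ge\Phi(x(y))+y\cdot(x-x(y))\) for all \(x\in\overline{B_R}\); this, together with \(\kappa\)-strong convexity on that convex set, suffices for the monotonicity inequality, so the interior localization is needed only for the claim \(x(y)\in B_R\).
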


\begin{proof}
If \(x\in\partial B_R\) and \(|y|<\kappa R/4\), then by convexity and
\eqref{norm},
\[
 x\cdot y-\Phi(x)
 \le R|y|-\frac{\kappa R^2}{2}
 <-\frac{\kappa R^2}{4},
\]
whereas the value at \(x=0\) is \(0\).  Thus the maximizer in
\eqref{dualdef} lies in \(B_R\).  Since \(\Phi\) is
\(\kappa\)-strongly convex, this maximizer is unique. We denote it by
\(x(y)\). Thus \(w\) is differentiable and \(Dw(y)=x(y)\).  If
\(x_i=x(y_i)\), strong monotonicity of \(\partial\Phi\) gives
\[
 (y_1-y_2)\cdot(x_1-x_2)
 \ge\kappa|x_1-x_2|^2.
\]
It follows that \(|x_1-x_2|\le\kappa^{-1}|y_1-y_2|\).  Hence
\eqref{C11} and \eqref{image} follow.
\end{proof}

\subsection{The dual Hessian and its equation}

When \(n=2\), all formulas below are read with
\(\sigma_0=1\); in particular
\(\cQ=\sigma_1=\tr\) and \(\cR=\sigma_2=\det\).

We next derive the dual equation.
On \(\Gamma_{n-1}\), define
\begin{equation*}
 \cQ(N)=\frac{\sigma_{n-1}(N)}{\sigma_{n-2}(N)},
 \qquad
 \cR(N)=\frac{\sigma_n(N)}{\sigma_{n-2}(N)},
 \qquad
 a(y)=f(Dw(y))-c.
\end{equation*}

\begin{proposition}
\label{dualhess}
At an Alexandrov point \(y\in B_{\kappa R/4}\), If
\(N=D^2w(y)\) is singular, it must have the form
\begin{equation}
 N=\kappa^{-1}(I-e\otimes e)
 \label{sing}
\end{equation}
for some unit vector \(e\).  Moreover,
\begin{equation}
 \kappa(n-1)\cQ(D^2w)-1
 +a(y)\cR(D^2w)=0
 \quad\text{a.e. in }B_{\kappa R/4}.
 \label{dual-ae}
\end{equation}
There is a compact set \(\mathcal K\), depending only on \(n,m,M\),
such that
\begin{equation}
 \mathcal K\Subset\Gamma_{n-1},
 \qquad
 D^2w(y)\in\mathcal K
 \quad\text{for a.e. }y\in B_{\kappa R/4}.
 \label{K}
\end{equation}
\end{proposition}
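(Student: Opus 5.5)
The plan is to read everything off from the relation between the Alexandrov Hessian of \(w\) at \(y\) and that of \(u\) at \(x=Dw(y)\). The inputs are \cref{alex}, Proposition~\ref{locdual}, and the viscosity supersolution property of \(u\). The key algebraic fact is the following. If \(N>0\) has eigenvalues \(\mu_i\), set \(A=N^{-1}-\kappa I\), with eigenvalues \(\lambda_i=\mu_i^{-1}-\kappa\). Multiplying \(\sigma_2(A)\) by \(\sigma_n(N)=\prod\mu_k\) and expanding gives
\[
 \sigma_n(N)\,\sigma_2(A)
 =\sigma_{n-2}(N)-\kappa(n-1)\sigma_{n-1}(N)+\binom n2\kappa^2\sigma_n(N).
\]
With \(\binom n2\kappa^2=c\) and \(a=f-c\), the equation \(\sigma_2(A)=f(x)\) is therefore equivalent to
\[
 F(N):=\sigma_{n-2}(N)-\kappa(n-1)\sigma_{n-1}(N)-a(y)\sigma_n(N)=0.
\]
Dividing by \(\sigma_{n-2}(N)>0\) gives \eqref{dual-ae}.

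\textbf{Nonsingular case.} Let \(y\) be an Alexandrov point of \(w\) with \(N=D^2w(y)\) invertible; note \(0\le N\le\kappa^{-1}I\). From
\[
 w(y+h)=w(y)+x\cdot h+\tfrac12h^TNh+o(|h|^2)
\]
and the local Legendre duality (the maximizer lies in the interior of \(B_R\) and \(Dw\) is Lipschitz), I will show that \(\Phi\) is twice differentiable at \(x\) with \(D^2\Phi(x)=N^{-1}\). Concretely, the two quadratic bounds with \(N\pm\varepsilon I\) for \(w\) transform into two-sided quadratic bounds with \((N\pm\varepsilon I)^{-1}\) for \(\Phi\) near \(x\). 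Then \(u\) is twice differentiable at \(x\) with Hessian \(A=N^{-1}-\kappa I\). By \cref{alex}, \(\sigma_2(A)=f(x)\), so the identity above yields \(F(N)=0\).

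\textbf{Singular case.} This is the main step, and I expect the bookkeeping of the duality to be the only delicate part. Suppose \(N\) is singular. The upper bound \(w\le w(y)+x\cdot h+\tfrac12h^T(N+\varepsilon I)h+\dots\) dualizes to a paraboloid touching \(\Phi\), hence \(u\), from below at \(x\). Its Hessian for \(u\) is
\[
 A_\varepsilon=(N+\varepsilon I)^{-1}-\kappa I,
\]
with eigenvalues \((\mu_i+\varepsilon)^{-1}-\kappa\).
\begin{itemize}
\item Each eigenvalue with \(\mu_i=0\) becomes \(L=\varepsilon^{-1}-\kappa\to\infty\).
\item The remaining eigenvalues are \(\ge(\kappa^{-1}+\varepsilon)^{-1}-\kappa\), which is \(\ge-C\varepsilon\).
\end{itemize}
Hence \(A_\varepsilon\in\Gamma_2\) for small \(\varepsilon\), so the test is admissible, and the supersolution inequality gives \(\sigma_2(A_\varepsilon)\le f(x)\). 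This has two consequences.
\begin{itemize}
\item If two eigenvalues \(\mu_i\) vanished, \(\sigma_2(A_\varepsilon)\ge L^2-O(L\varepsilon)\to\infty\), a contradiction. So the kernel of \(N\) is a line \(\R e\).
\item Dividing \(\sigma_2(A_\varepsilon)\le f(x)\) by \(L\) and letting \(\varepsilon\to0\) gives \(\sum_{i\ne e}(\mu_i^{-1}-\kappa)\le0\). Every term is \(\ge0\) because \(\mu_i\le\kappa^{-1}\), so \(\mu_i=\kappa^{-1}\) for all \(i\ne e\).
\end{itemize}
This proves \eqref{sing}. For this \(N\) we have \(\sigma_n=0\), \(\sigma_{n-1}=\kappa^{-(n-1)}\) and \(\sigma_{n-2}=(n-1)\kappa^{-(n-2)}\), so \(F(N)=0\) directly. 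Thus \eqref{dual-ae} holds at every Alexandrov point, that is, almost everywhere.

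\textbf{The compact set \(\mathcal K\).} Since \(c\in[m,M]\), \(\kappa\) lies between two positive constants depending on \(n,m,M\). I claim at most one eigenvalue of \(N\) is below some \(\delta=\delta(n,m,M)\).
\begin{itemize}
\item In the nonsingular case, if \(\mu_i,\mu_j\le\delta\), then \(A\ge0\) gives \(\sigma_2(A)\ge\lambda_i\lambda_j\ge(\delta^{-1}-\kappa)^2>M\) for small \(\delta\), contradicting \(\sigma_2(A)=f\le M\).
\item In the singular case, \eqref{sing} already gives the claim.
\end{itemize}
Take \(\mathcal K=\{0\le N\le\kappa_0^{-1}I:\ \text{at least } n-1 \text{ eigenvalues} \ge\delta\}\), with \(\kappa_0\) the lower bound for \(\kappa\). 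This set is closed and bounded, so compact. On it, \(\sigma_j(N)\ge\delta^j\) for \(j\le n-1\), so \(\mathcal K\Subset\Gamma_{n-1}\) and \eqref{K} holds.
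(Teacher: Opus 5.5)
Your proposal follows the paper's proof essentially step for step. You use the same lower test \(A_\varepsilon=(N+\varepsilon I)^{-1}-\kappa I\), obtained by dualizing the upper paraboloid for \(w\), and the same two-stage exclusion (kernel of dimension at least two, then the transverse sum). You also use the same second-order duality \(D^2u(x)=N^{-1}-\kappa I\), combined with Lemma~\ref{alex} and \(\sigma_j(N^{-1})=\sigma_{n-j}(N)/\sigma_n(N)\). Your compact set is equivalent: you bound the \((n-1)\)-st largest eigenvalue from below, while the paper bounds \(\sigma_{n-1}(N)\) from below.

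One sentence, however, is false as written: your two eigenvalue bullets do not imply \(A_\varepsilon\in\Gamma_2\). Take \(N=\kappa^{-1}(I-e\otimes e)\), which is exactly the form you are proving. Then \(A_\varepsilon\) has eigenvalues \(\varepsilon^{-1}-\kappa\) and \(-\kappa^2\varepsilon/(1+\kappa\varepsilon)\) (the latter \(n-1\) times), so
\[
 \sigma_2(A_\varepsilon)\longrightarrow-(n-1)\kappa^2<0,
\]
and the lower test is not admissible. With the paper's definition, a supersolution constrains only admissible tests, so you cannot invoke \(\sigma_2(A_\varepsilon)\le f(x)\) unconditionally. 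There are two short repairs.
\begin{itemize}
\item Argue by contradiction, as the paper does. If \(\dim\ker N\ge2\), or if \(\sum_{i\ne e}(\mu_i^{-1}-\kappa)>0\), then \(\sigma_2(A_\varepsilon)\to+\infty\). Together with \(\sigma_1(A_\varepsilon)>0\), this certifies \(A_\varepsilon\in\Gamma_2\), and it then contradicts the supersolution inequality.
\item Alternatively, note that \(\sigma_1(A_\varepsilon)>0\) for small \(\varepsilon\). Hence whenever \(A_\varepsilon\notin\Gamma_2\) one has \(\sigma_2(A_\varepsilon)\le0<f(x)\), so the inequality you divide by \(L\) holds in every case.
\end{itemize}

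In the nonsingular case, be careful with one step. To turn \(w\ge P_-\), which holds only near \(y\), into \(\Phi\le P_-^*\) near \(x\), the supremum defining \(\Phi(x')=w^*(x')\) must be attained at points near \(y\). The Lipschitz bound on \(Dw\) that you cite does not give this. The paper obtains it from \(\partial\Phi(x)=\{y\}\), which follows from the positive definite expansion of \(w\) at \(y\), together with the closed graph and local boundedness of the subdifferential. A convexity cone bound for \(w\) outside a small ball around \(y\) would also work.
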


\begin{proof}
Let \(w\) be twice differentiable at
\(y_0\in B_{\kappa R/4}\), and let
\(
 N=D^2w(y_0),
 x_0=Dw(y_0).
\)
Suppose that \(N\) is singular.  We first prove
\eqref{sing}.

Indeed, for every
\(\varepsilon>0\), on a sufficiently small ball about \(y_0\), we have
\begin{equation*}
 w(y)
 \le
 w(y_0)+x_0\cdot(y-y_0)
 +
 \frac12(y-y_0)^T(N+\varepsilon I)(y-y_0)
 =:P_\varepsilon(y).
\end{equation*}
Denote that small closed dual ball by \(\overline{\mathcal U}\), and
define the restricted conjugate
\[
 Q_\varepsilon(x)
 =
 \sup_{y\in\overline{\mathcal U}}
 \{x\cdot y-P_\varepsilon(y)\}.
\]
For \(x\) near \(x_0\), its maximizer is interior, and
\(Q_\varepsilon\) is the quadratic function with Hessian
\((N+\varepsilon I)^{-1}\).  Since
\(P_\varepsilon\ge w\) on the ball,
\[
 Q_\varepsilon(x)
 \le
 \sup_{y\in\overline{\mathcal U}}
 \{x\cdot y-w(y)\}
 \le
 \sup_{y\in\R^n}\{x\cdot y-w(y)\}
 =\Phi(x)
\]
for \(x\) near \(x_0\), and equality holds at \(x_0\). Hence
\(
 Q_\varepsilon(x)-\frac{\kappa}{2}|x|^2
\)
is a lower test for \(u\) at \(x_0\), with Hessian
\begin{equation*}
 A_\varepsilon=(N+\varepsilon I)^{-1}-\kappa I.
\end{equation*}

Let \(k=\dim\ker N\).  If \(k\ge2\), then \(A_\varepsilon\) has \(k\)
eigenvalues equal to \(\varepsilon^{-1}-\kappa\), while every
remaining eigenvalue is bounded below by \(-C\varepsilon\), in view
of \(N\le\kappa^{-1}I\).  It follows that, for small \(\varepsilon\), $A_\varepsilon\in\Gamma_2$, and
\[
 \sigma_2(A_\varepsilon)
 =
 \binom{k}{2}\varepsilon^{-2}+O(\varepsilon^{-1})
 \longrightarrow+\infty.
\]
This contradicts the viscosity supersolution inequality and
\eqref{fbounds}.

Therefore \(k=1\).  In diagonal coordinates write
\[
 N=\operatorname{diag}(0,\mu_2,\ldots,\mu_n),
 \qquad
 b_j=\mu_j^{-1}-\kappa\ge0.
\]
If \(B=\sum_{j=2}^n b_j>0\), then $A_\varepsilon\in\Gamma_2$, and
\[
 \sigma_2(A_\varepsilon)
 =
 (\varepsilon^{-1}-\kappa)
 \sum_{j=2}^n
 \bigl((\mu_j+\varepsilon)^{-1}-\kappa\bigr)
 +
 O(1)
 \longrightarrow+\infty,
\]
again a contradiction.  Hence \(B=0\), and every
\(b_j\) vanishes.  This proves \eqref{sing}.

We next prove the almost-everywhere equation
\eqref{dual-ae}.

At an Alexandrov point \(y\) where \(N=D^2w(y)>0\), set
\(x=Dw(y)\).  We claim that \(g=w^*\) is twice differentiable at
\(x\), with
\[
D^2g(x)=N^{-1}.
\]
First, \(y\) is the unique point in \(\partial g(x)\).  Otherwise,
\(w\) would be affine on a nontrivial segment with endpoint \(y\).
This contradicts the positive definite second-order expansion of
\(w\) at \(y\).
Since \(x\in B_R\), the subgradients of \(g\) are locally bounded near
\(x\).  Since the graph of \(\partial g\) is closed, it follows that every
point in \(\partial g(x')\) is close to \(y\) whenever \(x'\) is close
to \(x\).
Let \(P_\pm\) be the tangent quadratic polynomials to \(w\) at \(y\),
with Hessians \(N\pm\varepsilon I\).
The preceding localization and
\(P_-\le w\le P_+\) near \(y\) give
\[
P_+^*(x')\le g(x')\le P_-^*(x')
\]
for \(x'\) close to \(x\).  Since
\[
D^2P_\pm^*=(N\pm\varepsilon I)^{-1},
\]
letting \(\varepsilon\downarrow0\) yields \(D^2g(x)=N^{-1}\).
Finally, \(g=\Phi=u+\kappa|x|^2/2\) near \(x\), and hence
\[
D^2u(x)=N^{-1}-\kappa I.
\]

By Lemma~\ref{alex} and
\[
 \sigma_j(N^{-1})
 =
 \frac{\sigma_{n-j}(N)}{\sigma_n(N)}
\]
we have
\[
 \begin{split}
 f(x)
 &=
 \sigma_2(N^{-1}-\kappa I)\\
 &=
 \frac{\sigma_{n-2}(N)}{\sigma_n(N)}
 -\kappa(n-1)
  \frac{\sigma_{n-1}(N)}{\sigma_n(N)}
 +\binom n2\kappa^2.
 \end{split}
\]
Using \eqref{kappa} and multiplying by
\(\sigma_n(N)/\sigma_{n-2}(N)\) proves
\eqref{dual-ae} at such a point.

If \(N\) is singular, then by the preceding classification,
\eqref{sing} holds.  We then calculate
\[
 \cQ(N)=\frac1{\kappa(n-1)},
 \qquad
 \cR(N)=0.
\]
Thus \eqref{dual-ae} also holds at every singular
Alexandrov point.  Alexandrov's theorem proves the almost-everywhere
claim.

The quotient \(\cQ\) is uniformly elliptic only while the Hessian
stays in a compact subset of \(\Gamma_{n-1}\). 
At a positive definite Alexandrov point put
\[
 A=N^{-1}-\kappa I\ge0
\]
and order its eigenvalues as
\(\lambda_1\ge\cdots\ge\lambda_n\ge0\).  Since
\[
 \lambda_1\lambda_2
 \le\sigma_2(A)=f\le M,
\]
we have \(\lambda_2\le\sqrt M\).  Therefore at least \(n-1\)
eigenvalues of \(N=(A+\kappa I)^{-1}\) are bounded below by
\[
 d=(\kappa+\sqrt M)^{-1}.
\]
Together with \eqref{C11}, we obtain
\begin{equation}
 \sigma_{n-2}(N)\ge d^{\,n-2},
 \qquad
 \sigma_{n-1}(N)\ge d^{\,n-1}.
 \label{sigmabounds}
\end{equation}
The same estimates hold for singular $N$
due to \eqref{sing}.  It remains to construct the uniform compact
set in \eqref{K}.  Set
\[
 \kappa_-=\sqrt{\frac{m}{\binom n2}},
 \qquad
 \kappa_+=\sqrt{\frac{M}{\binom n2}},
 \qquad
 d_0=(\kappa_++\sqrt M)^{-1}.
\]
Since \(\kappa_-\le\kappa\le\kappa_+\) and \(d\ge d_0\), one may take
\begin{equation*}
 \mathcal K
 =
 \left\{
 N\in\Sym(n):
 0\le N\le\kappa_-^{-1}I,\quad
 \sigma_{n-1}(N)\ge d_0^{\,n-1}
 \right\}.
\end{equation*}
This set is compact.  Every one of its elements is nonnegative and
has rank at least \(n-1\), so
\(\sigma_j(N)>0\) for \(1\le j\le n-1\).  Thus
\(\mathcal K\Subset\Gamma_{n-1}\), as asserted.
Because the G{\aa}rding cone \(\Gamma_{n-1}\) is convex, $\co\mathcal K\Subset\Gamma_{n-1}$.
\end{proof}

\subsection{A uniformly elliptic viscosity equation}

The almost-everywhere equation is elliptic only on the compact set
\(\mathcal K\).  We extend it to all symmetric matrices.

\begin{proposition}
\label{viscosity}
After choosing \(\varepsilon_*\) in
\eqref{small} sufficiently small, there exist a
globally defined concave uniformly elliptic function
\(Q:\Sym(n)\to\mathbb R\) and a function
\(\widetilde{\cR}\in C^\infty(\Sym(n))\) with bounded derivatives.
They agree with \(\cQ\) and \(\cR\), respectively, in a neighborhood
of \(\mathcal K\).  The operator
\begin{equation}
 \cF(M,y)
 =
 \kappa(n-1)Q(M)-1+a(y)\widetilde{\cR}(M)
 \label{Fext}
\end{equation}
is uniformly elliptic on \(\Sym(n)\times B_{\kappa R/4}\), and
\begin{equation}
 \cF(D^2w,y)=0
 \label{dual-visc}
\end{equation}
in the viscosity sense in $B_{\kappa R/4}$.
\end{proposition}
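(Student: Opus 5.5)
We construct the extension in three steps: build \(Q\), build \(\widetilde{\cR}\), and then pass from the almost-everywhere equation \eqref{dual-ae} to \eqref{dual-visc} using the \(C^{1,1}\) regularity from Proposition~\ref{locdual}.

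\textbf{Construction of \(Q\).} Since \(\co\mathcal K\Subset\Gamma_{n-1}\), fix a compact convex neighborhood \(\mathcal K'\) with \(\co\mathcal K\subset\operatorname{int}\mathcal K'\) and \(\mathcal K'\Subset\Gamma_{n-1}\). On \(\Gamma_{n-1}\) the quotient \(\cQ=\sigma_{n-1}/\sigma_{n-2}\) is smooth, concave, and has positive definite derivative \(\partial\cQ/\partial N_{ij}\); by compactness this derivative lies between \(\lambda I\) and \(\Lambda I\) on \(\mathcal K'\), with \(\lambda,\Lambda\) depending only on \(n,m,M\). We then define
\[
Q(M)=\inf\bigl\{\cQ(N_0)+D\cQ(N_0):(M-N_0):\ N_0\in\mathcal K'\bigr\}.
\]
As an infimum of affine functions with slopes in \([\lambda I,\Lambda I]\), \(Q\) is concave and uniformly elliptic on all of \(\Sym(n)\), and \(Q\ge\cQ\) on \(\mathcal K'\) by concavity. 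For \(M\) in \(\operatorname{int}\mathcal K'\), taking \(N_0=M\) gives equality. Hence \(Q=\cQ\) on a neighborhood of \(\mathcal K\).

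\textbf{Construction of \(\widetilde{\cR}\).} Let \(\widetilde{\cR}=\chi\cR\), where \(\chi\in C_c^\infty\) equals \(1\) near \(\mathcal K\) and is supported in \(\Gamma_{n-1}\). Then \(\widetilde{\cR}\) is smooth with derivatives bounded by some \(C(n,m,M)\).

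\textbf{Uniform ellipticity of \(\cF\).} Since \(\kappa\ge\kappa_-\), we have \(\kappa(n-1)Q\) with ellipticity at least \((n-1)\kappa_-\lambda\). The perturbation satisfies
\[
|a(y)|\,|D\widetilde{\cR}|\le C\operatorname{osc}_{B_R}f\le C R^\alpha[f]_{C^{0,\alpha}}\le C\varepsilon_*,
\]
because \(a=f(Dw)-c\), \(Dw\in B_R\) by \eqref{image}, and \(c=\min_{\overline{B_R}}f\). Choosing \(\varepsilon_*\) small gives uniform ellipticity of \(\cF\) on \(\Sym(n)\times B_{\kappa R/4}\) with constants depending on \(n,m,M\). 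Note that \(\cF\) is not concave overall, which is acceptable because Fan's theorem will handle the perturbation.

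\textbf{Viscosity equation.} This is the main obstacle: we need \eqref{dual-visc} from \eqref{dual-ae} and \eqref{K}. Since \(D^2w\in\mathcal K\) a.e. and the extensions agree there, \(\cF(D^2w,y)=0\) a.e. The operator \(\cF\) is uniformly elliptic but has coefficient \(a(y)=f(Dw(y))-c\), which is continuous because \(Dw\) is Lipschitz. So we may invoke the standard fact (Caffarelli--Crandall--Kocan--Święch) that a \(W^{2,\infty}_{\mathrm{loc}}\) (hence \(W^{2,n}\)) strong solution of a uniformly elliptic equation with continuous \(y\)-dependence is an \(L^n\)-viscosity solution, and hence a \(C\)-viscosity solution.

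If we want the argument to be self-contained, we argue by contradiction. Suppose a \(C^2\) test function \(\varphi\) touches \(w\) from above at \(y_0\) with \(\cF(D^2\varphi(y_0),y_0)<0\). Make the touching strict by adding \(\delta|y-y_0|^2\); by continuity, \(\cF(D^2\varphi,y)<-\eta\) on a small ball. Then \(v=\varphi-w\) satisfies, a.e. in that ball, \(\mathcal P^-(D^2 v)\le\cF(D^2\varphi,y)-\cF(D^2w,y)<-\eta\), using ellipticity via the Pucci operator. Since \(v\in W^{2,\infty}\), the ABP estimate applied to \(v\) shows that \(v\) cannot have a strict interior minimum \(0\) with \(v>0\) on the boundary, which is a contradiction. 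The subsolution direction is symmetric.
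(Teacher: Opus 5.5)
Your proposal is correct and follows the paper's proof essentially step for step. Both use the same tangent-plane infimum defining \(Q\) over a compact convex neighborhood of \(\co\mathcal K\), the same cutoff \(\widetilde{\cR}=\chi\cR\), the bound \(\|a\|_\infty\le\operatorname{osc}_{B_R}f\) to keep \(\cF\) uniformly elliptic, and the standard fact that a \(W^{2,\infty}\) a.e.\ solution of a continuous uniformly elliptic equation is a viscosity solution. The paper simply cites Caffarelli--Cabr\'e for that last step, while you also sketch the ABP argument; in that sketch, take \(\delta\) small enough that \(\cF(D^2\varphi(y_0)+2\delta I,y_0)\) stays negative.
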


\begin{proof}

Choose a compact convex set
\begin{equation*}
 \co\mathcal K
 \Subset\operatorname{int}\mathcal C
 \Subset\mathcal C
 \Subset\Gamma_{n-1}.
\end{equation*}
The quotient \(\cQ\) is smooth, concave, and strictly elliptic on
\(\Gamma_{n-1}\); see, for example, \cite[Section~1]{CNS}.  Thus, on
\(\mathcal C\),
\begin{equation}
 \lambda I\le D\cQ(P)\le\Lambda I
 \label{Qell}
\end{equation}
for some \(0<\lambda\le\Lambda\).  Define
\begin{equation*}
 Q(M)
 =
 \inf_{P\in\mathcal C}
 \{\cQ(P)+D\cQ(P):(M-P)\}.
\end{equation*}
This is a finite, continuous, concave function on \(\Sym(n)\).
By the concavity of \(\cQ\), followed by the choice \(P=M\),
\[
 Q=\cQ\quad\text{on }\mathcal C.
\]
For \(H\ge0\), by \eqref{Qell} and the tangent-plane formula,
\begin{equation*}
 \lambda\tr H
 \le Q(M+H)-Q(M)
 \le\Lambda\tr H.
\end{equation*}

Choose \(\chi\in C_c^\infty(\Gamma_{n-1})\) equal to one in a
neighborhood of \(\mathcal C\), and define
\[
 \widetilde{\cR}=\chi\cR
\]
there, extending it by zero to all of \(\Sym(n)\).  It is smooth with
bounded derivative.  Therefore, if \(\|a\|_\infty\) is sufficiently
small, the operator \eqref{Fext} is uniformly
elliptic.  
Since \(D^2w\in\mathcal K\) almost everywhere, this extension does not
change the almost-everywhere equation \eqref{dual-ae}.
Indeed, for \(H\ge0\),
\[
 \begin{split}
 &\bigl(\kappa(n-1)\lambda
 -\|a\|_\infty\|D\widetilde{\cR}\|_\infty\bigr)\tr H\\
 &\quad\le
 \cF(M+H,y)-\cF(M,y)\\
 &\quad\le
 \bigl(\kappa(n-1)\Lambda
 +\|a\|_\infty\|D\widetilde{\cR}\|_\infty\bigr)\tr H.
 \end{split}
\]
The lower constant is positive once
\(\|a\|_\infty\|D\widetilde{\cR}\|_\infty
<\kappa(n-1)\lambda\).

We use the following standard fact.  Let
\(F:\Sym(n)\times\Omega\to\mathbb R\) be continuous and uniformly
elliptic.  If
\[
v\in W_{\mathrm{loc}}^{2,\infty}(\Omega)\cap C(\Omega),
\qquad
F(D^2v(x),x)=0
\quad\text{for a.e. }x\in\Omega,
\]
then \(v\) is a viscosity solution of the same equation; see
\cite[Chapters~2--3]{CaffarelliCabre}.

We now apply this fact to \(\cF\).  Recall that
\[
a(y)=f(Dw(y))-c.
\]
Since \(c=\min_{\overline{B_R}}f\) and \eqref{image} holds,
\[
0\le a(y)
\le \operatorname{osc}_{B_R}f
\le 2^\alpha R^\alpha[f]_{C^{0,\alpha}(B_R)}.
\]
Moreover, by \eqref{C11},
\[
|a(y)-a(y')|
\le
[f]_{C^{0,\alpha}(B_R)}
\kappa^{-\alpha}|y-y'|^\alpha.
\]
After choosing the constant in \eqref{small} sufficiently small,
the operator \(\cF\) is continuous and globally uniformly elliptic.
Since \(D^2w\in\mathcal K\) almost everywhere, the extensions used in
\(\cF\) agree with the original operators at \(D^2w\).  Hence
\eqref{dual-ae} gives
\[
\cF(D^2w,y)=0
\quad\text{for a.e. }y\in B_{\kappa R/4}.
\]
The standard result above now yields \eqref{dual-visc}.
\end{proof}

\section{$C^{2,\alpha}$ regularity of the dual function}
\label{prereg}

Propositions~\ref{locdual}
and~\ref{viscosity} give \(w\in C^{1,1}\) and the
viscosity equation \eqref{dual-visc}.  We now
upgrade \(w\) to \(C^{2,\alpha}\).  At this point we know only that
\(D^2w\ge0\); strict positivity will be proved in
Section~\ref{exclude}.

\begin{proposition}
\label{regularity}
Under the local assumptions and notation of Section~2, there is
a positive constant \(\varepsilon_*\), depending only on
\(n,\alpha,m,M\), such that, if \eqref{small}
holds,
\begin{equation}
 w\in C^{2,\alpha}(B_{\kappa R/8}).
 \label{C2a}
\end{equation}
Moreover,
\begin{equation}
 \|D^2w\|_{L^\infty(B_{\kappa R/8})}
 +(\kappa R)^\alpha
   [D^2w]_{C^{0,\alpha}(B_{\kappa R/8})}
 \le C,
 \label{scaled}
\end{equation}
where \(C\) depends only on \(n,\alpha,m,M\).
\end{proposition}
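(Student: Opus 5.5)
We rescale the problem to unit size and apply Fan's nonhomogeneous small-perturbation theorem \cite{Fan} to \eqref{dual-visc}. Set \(r=\kappa R/4\) and
\[
 \tilde w(z)=r^{-2}\bigl(w(rz)-w(0)-Dw(0)\cdot rz\bigr),\qquad z\in B_1 .
\]
Then \(D^2\tilde w(z)=D^2w(rz)\), so \(0\le D^2\tilde w\le\kappa^{-1}I\le\kappa_-^{-1}I\) a.e. and \(\|\tilde w\|_{C^{1,1}(B_1)}\le C(n,m,M)\). Moreover \(\tilde w\) is a viscosity solution of \(\tilde\cF(D^2\tilde w,z)=0\), where \(\tilde\cF(M,z)=\cF(M,rz)\). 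The coefficient \(\tilde a(z)=a(rz)\) satisfies
\[
 \|\tilde a\|_\infty\le 2^\alpha\varepsilon_*,
 \qquad
 [\tilde a]_{C^{0,\alpha}(B_1)}\le \kappa^{-\alpha}r^\alpha[f]_{C^{0,\alpha}}\le C\varepsilon_* ,
\]
by the estimates at the end of Proposition~\ref{viscosity} and \eqref{small}. So \(\tilde\cF\) is a \(C\varepsilon_*\) perturbation, in \(C^{0,\alpha}\) in \(z\) and uniformly in \(M\) on bounded sets, of the \(z\)-independent operator \(F_0(M)=\kappa(n-1)Q(M)-1\). The operator \(F_0\) is concave and uniformly elliptic, with ellipticity constants depending only on \(n,m,M\) through \(\mathcal C\) and \(\kappa_\pm\). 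Since \(\kappa\) ranges over the compact interval \([\kappa_-,\kappa_+]\), all constants can be taken uniform.

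The key steps are as follows. First, concavity and uniform ellipticity of \(F_0\) give interior \(C^{2,\bar\alpha}\) estimates for its solutions by Evans--Krylov, with \(\bar\alpha>\alpha\) for a suitable choice, or one can use the \(C^{2,\alpha}\) a priori estimates of Caffarelli for concave \(x\)-dependent equations. This supplies the regularity hypothesis on the unperturbed operator in Fan's theorem. Second, Fan's theorem (or Caffarelli's perturbation theory \cite{CaffarelliCabre} for concave equations with Hölder dependence in \(z\)) applies to \(\tilde w\). It is a bounded viscosity solution that is \(C^{1,1}\), hence flat at every scale after subtracting a quadratic, and the smallness parameter is \(C\varepsilon_*\). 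Choosing \(\varepsilon_*(n,\alpha,m,M)\) below the threshold of that theorem gives \(\tilde w\in C^{2,\alpha}(B_{1/2})\), with norm bounded by a constant depending only on \(n,\alpha,m,M\). Third, undoing the scaling gives \(w\in C^{2,\alpha}(B_{\kappa R/8})\), \(\|D^2w\|_\infty=\|D^2\tilde w\|_\infty\), and \([D^2w]_{C^{0,\alpha}}=r^{-\alpha}[D^2\tilde w]_{C^{0,\alpha}}\). This is exactly \eqref{scaled} up to the factor \(4^\alpha\).

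An alternative route avoids the perturbation theorem. Since \(\tilde\cF(M,z)\) is concave in \(M\) plus \(\tilde a(z)\widetilde\cR(M)\), one can freeze coefficients: write \(\tilde\cF(M,z)=\tilde\cF(M,z_0)+(\tilde a(z)-\tilde a(z_0))\widetilde\cR(M)\). Here \(\widetilde\cR\) is bounded, so the error is at most \(C|z-z_0|^\alpha\). The difficulty is that \(\tilde\cF(\cdot,z_0)\) is not concave, because \(\widetilde\cR\) is not concave.

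I expect this loss of exact concavity to be the main obstacle. I would handle it by viewing \(\tilde a\widetilde\cR\) as a perturbation of size \(C\varepsilon_*\) in \(C^1\) in \(M\), which is exactly the setting of Fan's nonhomogeneous small-perturbation theorem. The task is then to check its hypotheses carefully: the perturbation must be small in \(C^{0,\alpha}\) in \(z\), with \(D_M\)-Lipschitz control, and the base operator must have \(C^{2,\bar\alpha}\) estimates. Both follow from the bounds above. If the theorem requires \(\|\tilde w\|_{L^\infty}\) small relative to a quadratic solution, I would instead take \(\tilde w\) minus its second-order Taylor polynomial at an Alexandrov point, rescaled at a smaller scale \(\rho\). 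The \(C^{1,1}\) bound still controls this, with \(\rho\) depending only on the data.
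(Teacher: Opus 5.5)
Your strategy (rescale, treat $a\widetilde{\cR}$ as a small perturbation of the concave operator, invoke Fan's theorem) is the paper's, but there is a genuine gap: you never produce the flat quadratic approximation that Fan's theorem requires. Lemma~\ref{fan} needs, after subtracting a quadratic $P$ with $A=D^2P$, the bound $\|\tilde w-P\|_{L^\infty(B_\eta)}\le\delta\eta^2$ with $\delta$ below the threshold. The scale $\eta$ must depend only on $n,\alpha,m,M$. Also, $A$ must lie, with a buffer, in the region where the operator is $C^1$; recall $Q$ is the inf-of-tangent-planes extension and is only known to be $C^1$ on $\operatorname{int}\mathcal C$. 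The smallness of $\varepsilon_*$ controls only $\tilde a$ and the right-hand side $h$, not $\|\tilde w-P\|_{L^\infty}$. Your claim that a $C^{1,1}$ function is ``flat at every scale after subtracting a quadratic'' is false: the $C^{1,1}$ bound only gives $|\tilde w-P|\le C\eta^2$ with $C$ of order $\kappa^{-1}$, which is not small. Your fallback does not repair this. At an Alexandrov point the remainder is $o(\eta^2)$, but the rate depends on the point and on the function, so $\rho$ does not depend only on the data. At best this gives $C^{2,\alpha}$ on an open set of full measure, with no uniform estimate and no conclusion at the remaining points.

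The missing idea is a compactness argument, and this is where concavity and Evans--Krylov actually enter, not as a hypothesis of Fan's theorem. Suppose uniform flatness fails for a fixed $\eta$. Take coefficients $a_j\to0$ and solutions $v_j$ with uniform $C^{1,1}$ bounds and $D^2v_j\in\mathcal K$ a.e. A subsequence converges in $C^1$ with $D^2v_j\rightharpoonup^* D^2v_\infty$, so $D^2v_\infty\in\co\mathcal K$ a.e. By stability, $v_\infty$ solves the concave uniformly elliptic equation $\kappa(n-1)Q(D^2v_\infty)=1$. Evans--Krylov then gives $\|v_\infty-P_\infty\|_{L^\infty(B_\eta)}\le C\eta^{2+\bar\alpha}\le\frac{\varepsilon}{4}\eta^2$, with $D^2P_\infty=D^2v_\infty(0)\in\co\mathcal K$. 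Correct $P_\infty$ by $t_j|x|^2/2$, $|t_j|\le C\|a_j\|_\infty$, so that it solves the perturbed equation at the base point; this contradicts the assumed failure. Only with this quadratic in hand can you rescale $z=\eta^{-2}(v-P)(\eta\,\cdot)$ and shift the operator to $\cF(A+M,\cdot)-\cF(A,\cdot)$. That shifted operator is $C^1$ with a modulus for $\|M\|<2\rho$ because $A+M$ stays in a compact subset of $\operatorname{int}\mathcal C$. You can then verify the hypotheses of Fan's theorem and cover $B_{1/2}$ by recentred rescalings, each of which again satisfies $D^2\in\mathcal K$ a.e.
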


To improve the regularity of \(w\) from \(C^{1,1}\) to \(C^{2,\alpha}\), we apply a
local small perturbation theorem from
\cite[Theorem~1.7]{Fan}.  

 \begin{lemma}
\label{fan}
Let \(0<\alpha<1\) and \(\rho>0\).  Let
\[
 G:\Sym(n)\times B_1\longrightarrow\R
\]
be continuous and globally
\((\lambda_0,\Lambda_0)\)-uniformly elliptic, and suppose
\[
 G(0,x)=0
 \qquad(x\in B_1).
\]
Assume that \(M\mapsto G(M,x)\) is \(C^1\) for
\(\|M\|<2\rho\), uniformly in \(x\), and that \(D_MG\) has a joint
modulus of continuity there: for
\(\|M\|,\|M'\|<2\rho\),
\begin{equation}
 \|D_MG(M,x)-D_MG(M',x')\|
 \le
 \omega_G\bigl(\|M-M'\|+|x-x'|\bigr).
 \label{Fan-mod}
\end{equation}
Assume also that
\begin{equation}
 |G(M,x)-G(M,x')|
 \le\delta|x-x'|^\alpha
 \qquad
 (\|M\|<2\rho,\ x,x'\in B_1).
 \label{Fan-x}
\end{equation}
If \(z\in C(B_1)\) is a viscosity solution of
\[
 G(D^2z,x)=h(x)
 \qquad\text{in }B_1
\]
and
\begin{equation}
 \|z\|_{L^\infty(B_1)}
 +\|h\|_{C^{0,\alpha}(B_1)}
 \le\delta,
 \label{Fan-small}
\end{equation}
then, for
\[
 \delta
 =
 \delta(n,\alpha,\rho,\lambda_0,\Lambda_0,\omega_G)>0
\]
sufficiently small,
\[
 z\in C^{2,\alpha}(B_{1/2}),
 \qquad
 \|z\|_{C^{2,\alpha}(B_{1/2})}\le C.
\]
\end{lemma}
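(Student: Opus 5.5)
Lemma~\ref{fan} is \cite[Theorem~1.7]{Fan} in our notation, so the plan is mainly to check that its hypotheses match Fan's and then recall the structure of the argument behind it; we do not intend to reprove Savin's theory. The dictionary is as follows. Fan's operator is \(G\) with \(G(0,x)=0\) and global ellipticity \((\lambda_0,\Lambda_0)\). Fan's regularity of \(D_MG\) in a neighbourhood \(\|M\|<2\rho\) of the origin is \eqref{Fan-mod}. The Hölder dependence on \(x\) of size \(\delta\) is \eqref{Fan-x}. The smallness of the solution and of the right-hand side is \eqref{Fan-small}. Fan's constant \(\delta\) depends only on \(n,\alpha,\rho,\lambda_0,\Lambda_0,\omega_G\), which is exactly the dependence we record. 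If Fan's statement is phrased with \(h\equiv0\), or with \(h(0)=0\), we first replace \(G(M,x)\) by \(G(M,x)-h(x)+h(0)\) and \(z\) by \(z-\tfrac12 h(0)\,x^TP\,x\). Here \(P\) is chosen so that \(G(\cdot,0)\) vanishes at the corresponding matrix; this matrix is \(O(\delta)\) by ellipticity. The modified operator again satisfies \eqref{Fan-mod}--\eqref{Fan-x} with \(\delta\) replaced by a constant multiple of \(\delta\).

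For completeness, the argument behind the theorem runs as follows. Step one is the Savin-type Harnack inequality for flat solutions. If \(|z|\le\delta\) in \(B_1\) and \(G\) is \(C^1\) near \(0\), then \(z\) is a viscosity solution of an equation that is a small perturbation of the linear equation \(D_MG(0,0):D^2z=0\) at the scale of \(z\). Sliding paraboloids give an \(L^\varepsilon\) or oscillation decay that holds down to scale \(\delta^{c}\); this is the step that uses only the \(C^1\) modulus near \(0\) and not global smoothness. Step two is improvement of flatness by compactness. Rescale \(z/\delta\); if the conclusion failed along a sequence, the Harnack step gives equicontinuity, and a limit solves the constant-coefficient linear equation. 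Its \(C^{2,\alpha}\) Taylor polynomial then approximates \(z\) at scale \(r_0\) with error \(r_0^{2+\alpha}\delta\).

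Step three is iteration. Subtract the quadratic polynomial \(P_k\) at each dyadic scale \(r_0^k\) and rescale. The new operator \(G_k(M,x)=r_0^{-k\alpha}\bigl[G(M+D^2P_k,r_0^kx)-h(r_0^kx)\bigr]\) again satisfies the hypotheses with the same constants. This holds because \(\|D^2P_k\|\) stays below \(\rho\) by the geometric summation, \eqref{Fan-mod} controls the change of the linearization, and the \(x\)-oscillation and \(h\) contribute only \(\delta r^\alpha\), which is invariant under the \(C^{2,\alpha}\) scaling. Summing gives a pointwise \(C^{2,\alpha}\) estimate at every point of \(B_{1/2}\), hence the conclusion.

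The main obstacle is step one: obtaining a Harnack inequality when ellipticity of the linearization is only known near \(M=0\), which is exactly Savin's contribution and Fan's nonhomogeneous extension of it. In our proof we simply invoke it.
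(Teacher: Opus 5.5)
Your proposal follows the paper's route: the paper's proof only observes that the argument of \cite[Theorem~1.7]{Fan} applies to the operator $G(D^2z,x)$, since the $z$- and $Dz$-terms are absent, and that its hypotheses are supplied by global uniform ellipticity and \eqref{Fan-mod}--\eqref{Fan-small}. Your Savin-type sketch is not needed, and as written its rescaled operator should be $r_0^{-k\alpha}\bigl[G(r_0^{k\alpha}M+D^2P_k,r_0^kx)-h(r_0^kx)\bigr]$: without the factor $r_0^{k\alpha}$ inside $G$, the ellipticity constants are multiplied by $r_0^{-k\alpha}$ instead of being preserved.
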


\begin{proof}
The proof of \cite[Theorem~1.7]{Fan} applies to the present operator
\(G(D^2z,x)\).  Since the operator does not depend on \(z\) or \(Dz\),
the corresponding terms are absent.  The remaining assumptions are
given by uniform ellipticity and
\eqref{Fan-mod}--\eqref{Fan-small}.
\end{proof}

\begin{lemma}
\label{perturb}
Let \(0<\alpha<1\).  Let
\(\mathcal K\subset\Sym(n)\) be compact, and let
\(\mathcal O\subset\Sym(n)\) be open with
\(\co\mathcal K\Subset\mathcal O.\)
Let \(G_0:\Sym(n)\to\R\) be globally concave and globally
\((\lambda,\Lambda)\)-uniformly elliptic.  Assume that \(G_0\) is
\(C^1\) on \(\mathcal O\), with \(DG_0\) uniformly continuous on
compact subsets of \(\mathcal O\).  Let
\(H\in C^\infty(\Sym(n))\) have bounded first derivative.

For every \(L<\infty\), there are
\(\delta>0\) and \(C<\infty\), depending only on the displayed data
and \(L\), such that the following holds.  If
\begin{equation}
 \begin{split}
 &v\in C^{1,1}(B_1),\qquad
   \|v\|_{C^{1,1}(B_1)}\le L,\qquad
   D^2v\in\mathcal K\quad\text{a.e. in }B_1,\\
 &b\in C^{0,\alpha}(B_1),\qquad
   \|b\|_{C^{0,\alpha}(B_1)}\le\delta,
 \end{split}
 \label{pert-assume}
\end{equation}
and \(v\) is a viscosity solution of
\begin{equation*}
 G_0(D^2v)+b(x)H(D^2v)=0
 \qquad\text{in }B_1,
\end{equation*}
then
\[
 v\in C^{2,\alpha}(B_{1/2}),
 \qquad
 \|v\|_{C^{2,\alpha}(B_{1/2})}\le C.
\]
\end{lemma}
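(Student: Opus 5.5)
The plan is a compactness argument that ends with an application of Lemma~\ref{fan} at a point. Fix $x_0\in B_{1/2}$ and write $v$ near $x_0$ as its second-order Taylor polynomial plus a remainder. Set
\[
P(x)=v(x_0)+Dv(x_0)\cdot(x-x_0)+\tfrac12 (x-x_0)^TA(x-x_0),
\]
where $A$ is a matrix to be chosen in $\co\mathcal K$ close to the Hessian of $v$ near $x_0$. We want $z=v-P$ to satisfy the hypotheses of Lemma~\ref{fan} after rescaling. With $r$ small, put
\[
z_r(x)=r^{-2}\bigl(v(x_0+rx)-P(x_0+rx)\bigr).
\]
Then $z_r$ solves $G(D^2z_r,x)=h(x)$, where
\[
G(N,x)=G_0(A+N)-G_0(A)+b(x_0+rx)\bigl(H(A+N)-H(A)\bigr),
\]
\[
h(x)=-G_0(A)-b(x_0+rx)H(A).
\]
This $G$ satisfies $G(0,x)=0$ and is uniformly elliptic, provided $\delta\|DH\|_\infty<\lambda/2$. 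Since $A+N$ lies in a compact subset of $\mathcal O$ whenever $\|N\|<2\rho$ with $\rho<\tfrac12\operatorname{dist}(\co\mathcal K,\partial\mathcal O)$, $D_MG$ has a joint modulus coming from the uniform continuity of $DG_0$ and $DH$ together with $\delta$. The $x$-Hölder bound \eqref{Fan-x} holds with constant $C\delta r^\alpha$. The remaining task is the smallness \eqref{Fan-small}: we need $\|z_r\|_{L^\infty(B_1)}$ small and $\|h\|_{C^{0,\alpha}}$ small.

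The key step is to choose $A$ so that $v$ is $L^\infty$-close to a quadratic at scale $r$. Suppose no such choice is possible. Then we take sequences $\delta_k\to0$, solutions $v_k$ and points $x_k$ for which $\inf_A\|z_{r,k}\|_{L^\infty(B_1)}\ge\eta$ for a fixed small $r$. Normalize the rescalings by subtracting affine parts. They have $D^2\in\mathcal K$ a.e. and are bounded in $C^{1,1}$, so Arzelà--Ascoli gives a subsequence converging in $C^1$ to a $C^{1,1}$ limit $v_\infty$. By stability of viscosity solutions, $v_\infty$ solves $G_0(D^2v_\infty)=0$. Moreover, $D^2v_\infty\in\co\mathcal K$ a.e., since weak-$*$ limits of Hessians lie in the closed convex hull. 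Because $G_0$ is concave and uniformly elliptic, Evans--Krylov gives $v_\infty\in C^{2,\bar\alpha}$ with universal bounds. Hence for $r$ small, $v_\infty$ is within $r^{2+\bar\alpha}$ of its Taylor quadratic, whose Hessian lies in $\co\mathcal K$ by continuity. This contradicts the choice of $\eta$ once $\eta\sim r^{\bar\alpha}$ is fixed appropriately, first choosing $r$ and then $\eta=\delta_{\rm Fan}$. For the right-hand side, note that $G_0(A)=G_0(D^2v_\infty(x_0))=0$ in the limit, so $|G_0(A)|$ is small along the sequence. The term $b\,H(A)$ has $C^{0,\alpha}$ norm at most $C\delta$. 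Thus $\|h\|_{C^{0,\alpha}(B_1)}$ is small, and we can arrange $G(0,x)=0$ with that $h$.

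Lemma~\ref{fan} then gives $\|z_r\|_{C^{2,\alpha}(B_{1/2})}\le C$. Scaling back, $v\in C^{2,\alpha}(B_{r/2}(x_0))$ with a bound depending on $r$, and covering $B_{1/2}$ by finitely many such balls gives the estimate. The main obstacle is the compactness step. It requires keeping $A$ inside a fixed compact subset of $\mathcal O$, which is where $\co\mathcal K\Subset\mathcal O$ is needed. It also requires making the smallness constants depend only on the listed data, which is the reason the contradiction is set up uniformly over all admissible $v$ and $b$.
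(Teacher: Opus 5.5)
Your proposal is correct and follows essentially the same route as the paper: a compactness argument with Evans--Krylov produces a quadratic whose Hessian lies in $\co\mathcal K$ and which approximates $v$ at a fixed small scale, the rescaled difference is fed into Lemma~\ref{fan}, and a covering argument finishes. The one difference is cosmetic. The paper shifts $A$ by $t_jI$ so that $F_b(A,0)=0$, whereas you take $A=D^2v_\infty(0)$, so that $G_0(A)=0$, and absorb $b\,H(A)$ into $h$ using the smallness of $\|b\|_{C^{0,\alpha}}$. That works, provided you state the contradiction claim with $A$ restricted to $\co\mathcal K\cap\{G_0=0\}$, and you fix the $L^\infty$ smallness (from Fan's constant) before choosing the scale $r$.
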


\begin{proof}
Choose compact convex sets
$
 \co\mathcal K
 \Subset\operatorname{int}\mathcal C_0,
 \mathcal C_0
 \Subset\operatorname{int}\mathcal C_1,
 \mathcal C_1\Subset\mathcal O,
$
and let
\[
 d_*=\operatorname{dist}(\mathcal C_0,\partial\mathcal C_1)>0.
\]
After decreasing \(\delta\), the function
\[
 F_b(M,x)=G_0(M)+b(x)H(M)
\]
is globally uniformly elliptic, with constants independent of \(b\).
Indeed, for \(P\ge0\),
\[
 |H(M+P)-H(M)|
 \le \|DH\|_\infty\tr P,
\]
so the perturbation changes the lower ellipticity constant by at most
\(\|b\|_\infty\|DH\|_\infty\).

We first find a quadratic approximation whose Hessian remains in the
prescribed set.
Fix a number \(\varepsilon>0\), to be chosen below.  By the Evans--Krylov estimate \cite{Evans,Krylov}, there are
\(\bar\alpha=\bar\alpha(n,\lambda,\Lambda)>0\) and
\(C_1<\infty\) such that every solution \(u\) of
\[
G_0(D^2u)=0\qquad\text{in }B_1
\]
with \(\|u\|_{L^\infty(B_1)}\le L\) satisfies
\[
[D^2u]_{C^{0,\bar\alpha}(B_{1/2})}\le C_1.
\]
Here \(C_1\) depends only on
\(n,\lambda,\Lambda,|G_0(0)|\), and \(L\).
Fix
\(\eta\in(0,1/4)\) so small that
\[
 C_1\eta^{\bar\alpha}\le\frac{\varepsilon}{4}.
\]
We claim that there is \(\delta_1>0\) such that, whenever
\(\|b\|_\infty\le\delta_1\), there is a quadratic polynomial \(P\),
with \(A=D^2P\), satisfying
\begin{equation}
 \|v-P\|_{L^\infty(B_\eta)}
 \le\varepsilon\eta^2,
 \qquad
 F_b(A,0)=0,
 \qquad
 \operatorname{dist}(A,\mathcal C_0)<\frac{d_*}{4}.
 \label{quad}
\end{equation}
If no such \(\delta_1\) existed for this fixed \(\eta\), there would
be \(b_j\to0\) uniformly and solutions \(v_j\) obeying
\eqref{pert-assume} for which
\eqref{quad} fails.  After passing to a
subsequence, we have
\[
 v_j\longrightarrow v_\infty
 \quad\text{in }C^1_{\mathrm{loc}}(B_1),
 \qquad
 D^2v_j\rightharpoonup^\ast D^2v_\infty
 \quad\text{in }L^\infty_{\mathrm{loc}}(B_1).
\]
Since \(D^2v_j\in\mathcal K\) almost everywhere, the weak-* limit
takes values in the convex hull of \(\mathcal K\):
\[
D^2v_\infty\in\co\mathcal K
\quad\text{a.e.}
\]
By viscosity stability,
\[
 G_0(D^2v_\infty)=0.
\]
By the Evans--Krylov theorem
\cite{Evans,Krylov}, applied to the globally concave
uniformly elliptic operator \(G_0\), and with the exponent and
constant fixed above,
\[
 v_\infty\in C^{2,\bar\alpha}(B_{1/2})
\]
with a uniform estimate.  Since \(D^2v_\infty\) is continuous,
\(
D^2v_\infty(x)\in\co\mathcal K, x\in B_1.
\)
Let \(P_\infty\) be the second-order Taylor polynomial of
\(v_\infty\) at the origin.  Then
\[
A_\infty:=D^2P_\infty=D^2v_\infty(0)\in\co\mathcal K.
\]  Then
\[
 \|v_\infty-P_\infty\|_{L^\infty(B_\eta)}
 \le C_{\rm EK}\eta^{2+\bar\alpha}
 \le\frac{\varepsilon}{4}\eta^2.
\]
Since \(G_0(A_\infty)=0\), uniform ellipticity along
\(A_\infty+tI\) provides \(t_j\to0\) such that
\[
 F_{b_j}(A_\infty+t_jI,0)=0,
 \qquad
 |t_j|\le C\|b_j\|_\infty.
\]
The polynomials
\[
 P_j=P_\infty+\frac{t_j}{2}|x|^2
\]
satisfy, for all large \(j\),
\[
 \|v_j-P_j\|_{L^\infty(B_\eta)}
 \le
 \|v_j-v_\infty\|_{L^\infty(B_\eta)}
 {}+\frac{\varepsilon}{4}\eta^2
 {}+\frac{|t_j|}{2}\eta^2
 <\varepsilon\eta^2.
\]
Moreover,
\(\operatorname{dist}(A_\infty+t_jI,\mathcal C_0)<d_*/4\), because
\(A_\infty\in\co\mathcal K\subset\mathcal C_0\) and \(t_j\to0\).
Thus \(P_j\) satisfies all of
\eqref{quad}, which is a
contradiction.

We next reduce the lemma to the local perturbation theorem.
Let
\[
 \rho=\frac{d_*}{8},
 \qquad
 z(x)=\frac{v(\eta x)-P(\eta x)}{\eta^2}.
\]
Then \(\|z\|_{L^\infty(B_1)}\le\varepsilon\) and
\[
 \widehat F(D^2z,x)=h(x)
 \qquad\text{in }B_1,
\]
where
\begin{equation*}
 \begin{split}
 \widehat F(M,x)
 &=
 F_b(A+M,\eta x)-F_b(A,\eta x),\\
 h(x)&=-F_b(A,\eta x).
 \end{split}
\end{equation*}
Thus \(\widehat F(0,x)=0\).  The Hessian buffer in
\eqref{quad} implies
\begin{equation}
 A+M\in\mathcal C_1
 \qquad\text{whenever }\|M\|<2\rho.
 \label{buffer}
\end{equation}
Consequently \(D_M\widehat F\) has a uniform modulus of continuity in
that matrix ball.  More explicitly, uniform continuity of \(DG_0\) on
\(\mathcal C_1\), together with smoothness of \(H\) on the fixed
compact neighborhood, implies
\begin{equation*}
 \begin{split}
 &\|D_M\widehat F(M,x)-D_M\widehat F(M',x')\|\\
 &\qquad\le
 \omega_0(\|M-M'\|)
 +C\|M-M'\|+C|x-x'|^\alpha
 \end{split}
\end{equation*}
for \(\|M\|,\|M'\|<2\rho\), where \(\omega_0\) is a modulus for
\(DG_0\) on \(\mathcal C_1\).  Moreover,
\begin{equation}
 \begin{split}
 |\widehat F(M,x)-\widehat F(M,x')|
 &\le
 C\eta^\alpha[b]_{C^\alpha(B_1)}
 |x-x'|^\alpha,
 \qquad \|M\|<2\rho,\\
 \|h\|_{C^\alpha(B_1)}
 &\le C\eta^\alpha[b]_{C^\alpha(B_1)}.
 \end{split}
 \label{shift-small}
\end{equation}
Here the second estimate uses \(F_b(A,0)=0\), while the first uses
\[
 \widehat F(M,x)-\widehat F(M,x')
 =
 \bigl(b(\eta x)-b(\eta x')\bigr)
 \bigl(H(A+M)-H(A)\bigr).
\]

Let \(\delta_F\) be the smallness constant in Lemma~\ref{fan}.
Choose \(\varepsilon\le\delta_F/2\).  Once \(\eta\) and
\(\delta_1\) are fixed, choose the \(\delta\) in the present lemma so
that \(\delta\le\delta_1\), the operator \(F_b\) remains uniformly
elliptic, and both bounds in \eqref{shift-small} are at most
\(\delta_F/2\).  Then
\[
\|z\|_{L^\infty(B_1)}
+\|h\|_{C^{0,\alpha}(B_1)}
\le\delta_F.
\]
Together with \eqref{buffer} and \eqref{shift-small}, this verifies
the hypotheses of Lemma~\ref{fan}.
Hence
\(
z\in C^{2,\alpha}(B_{1/2})
\)
with a uniform estimate.  After scaling back, we obtain the
corresponding estimate for \(v\) in \(B_{\eta/2}\).  
It remains to make the estimate uniform on \(B_{1/2}\).  Fix
\(r\in(0,1/4)\).  For each \(x_0\in B_{1/2}\), define
\[
v_{x_0,r}(z)
=
\frac{v(x_0+rz)-v(x_0)-rDv(x_0)\cdot z}{r^2}.
\]
Then
\[
D^2v_{x_0,r}(z)=D^2v(x_0+rz)\in\mathcal K
\quad\text{a.e. in }B_1,
\]
and the \(C^{1,1}(B_1)\) norm of \(v_{x_0,r}\) is bounded by a
constant depending only on \(L\).  The rescaled coefficient
\(
b_{x_0,r}(z)=b(x_0+rz)
\)
has a uniformly small \(C^{0,\alpha}(B_1)\) norm.
Thus the preceding
argument gives the same local estimate at every \(x_0\in B_{1/2}\).
A finite covering of \(B_{1/2}\) completes the proof.
\end{proof}

\begin{proof}[Proof of Proposition~\ref{regularity}]
Fix \(y_0\in B_{\kappa R/8}\) and
\(
 s=\frac{\kappa R}{16}.
\)
Define
\begin{equation*}
 W(z)
 =
 \frac{w(y_0+sz)-w(y_0)-sDw(y_0)\cdot z}{s^2},
 \qquad z\in B_1.
\end{equation*}
Then \(B_s(y_0)\Subset B_{\kappa R/4}\), and by
\eqref{C11},
\[
 |W(z)|\le\frac1{2\kappa}|z|^2,
 \qquad
 |DW(z)|\le\frac1\kappa|z|,
 \qquad
 0\le D^2W\le\frac1\kappa I
\]
almost everywhere.  Thus \(W\) has the uniform full
\(C^{1,1}(B_1)\) bound required below.  Moreover,
\[
 D^2W\in\mathcal K\quad\text{a.e.}
\]
The rescaled coefficient \(a_s(z)=a(y_0+sz)\) satisfies
\begin{equation}
 \begin{split}
 \|a_s\|_{L^\infty(B_1)}
 &\le
 2^\alpha R^\alpha[f]_{C^{0,\alpha}(B_R)},\\
 [a_s]_{C^{0,\alpha}(B_1)}
 &\le
 [f]_{C^{0,\alpha}(B_R)}
 \kappa^{-\alpha}s^\alpha
 \le
 C R^\alpha[f]_{C^{0,\alpha}(B_R)}.
 \end{split}
 \label{rescale-small}
\end{equation}
Equation \eqref{dual-visc} becomes
\[
 \bigl(\kappa(n-1)Q(D^2W)-1\bigr)
 +a_s(z)\widetilde{\cR}(D^2W)=0
 \qquad\text{in }B_1.
\]
Apply Lemma~\ref{perturb} with
\[
 G_0(M)=\kappa(n-1)Q(M)-1,
 \qquad
 H(M)=\widetilde{\cR}(M),
 \qquad
 \mathcal O=\operatorname{int}\mathcal C.
\]
The smallness required in that lemma follows from
\eqref{small} and
\eqref{rescale-small}.  Hence
\(
 W\in C^{2,\alpha}(B_{1/2})
\)
with a uniform estimate.  After scaling back, we obtain
\[
 \|D^2w\|_{L^\infty(B_{s/2}(y_0))}
 +s^\alpha
 [D^2w]_{C^{0,\alpha}(B_{s/2}(y_0))}
 \le C.
\]
A finite covering of \(B_{\kappa R/8}\) by such balls proves
\eqref{C2a}.  To obtain the global
seminorm in \eqref{scaled}, use the preceding local
estimate when \(|y-y'|<s/2\), centered at one of the two points.  If
\(|y-y'|\ge s/2\), then
\[
 \frac{|D^2w(y)-D^2w(y')|}{|y-y'|^\alpha}
 \le
 2\|D^2w\|_{L^\infty}\left(\frac2s\right)^\alpha.
\]
Since \(s=\kappa R/16\), these two cases give
\eqref{scaled}.
\end{proof}

\section{Exclusion of singular dual Hessians}
\label{exclude}

By Proposition~\ref{regularity}, the localized dual function $w$ is in
\(C^{2,\alpha}\), so its Hessian is defined pointwise.  Suppose for
the moment that
\[
 \det D^2w(y_0)=0
\]
at some point.  By \eqref{sing}, the kernel is
one-dimensional and the other \(n-1\) eigenvalues all equal
\(\kappa^{-1}\).  
We will show that \(D^2w\) remains singular near \(y_0\).  The proof
uses two changes of variables and a strong maximum principle. 

\begin{lemma}
\label{dini}
Let \(N\ge2\), let \(Z\in C(B_1)\) be nonnegative, and let
\(A\in L^\infty(B_1)\) satisfy
\begin{equation*}
 0<\lambda\le A\le\Lambda
 \quad\text{a.e. in }B_1.
\end{equation*}
Suppose
\begin{equation*}
 \partial_{11}(AZ)+\sum_{i=2}^{N}\partial_{ii}Z=0
 \qquad\text{in }\mathcal D'(B_1).
\end{equation*}
Assume that there are \(A_0>0\), \(r_0\in(0,1)\), and a nondecreasing
modulus \(\omega\) such that
\begin{equation*}
 \operatorname*{ess\,sup}_{B_r}|A-A_0|\le\omega(r)
 \quad(0<r<r_0),
 \qquad
 \int_0^{r_0}\frac{\omega(r)}r\,dr<\infty.
\end{equation*}
If \(Z(0)=0\), then \(Z\equiv0\) in a neighborhood of the origin.
\end{lemma}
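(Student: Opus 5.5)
The plan is to prove the strong minimum principle by testing the adjoint equation against a barrier built from the fundamental solution of the frozen operator. The Dini condition at the origin is what makes this barrier an honest supersolution. Write
\[
L\varphi=A\,\partial_{11}\varphi+\sum_{i=2}^N\partial_{ii}\varphi .
\]
The hypothesis says \(\int Z\,L\varphi=0\) for all \(\varphi\in C_c^\infty(B_1)\). Since \(Z\in C\) and \(A\in L^\infty\), a mollification argument extends this identity to \(\varphi\in W^{2,\infty}\) with compact support. It also extends to Lipschitz, piecewise \(C^2\) functions whose second derivatives are measures, provided \(A\) is paired only with the absolutely continuous part and the singular part is handled separately below. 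This justification of the pairing is routine but must be done carefully.

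\emph{Step 1 (barrier).} Put \(\tilde x=(x_1/\sqrt{A_0},x')\) and let \(\Gamma_0(x)=|\tilde x|^{2-N}\) for \(N\ge3\), or \(-\log|\tilde x|\) for \(N=2\). This is the fundamental solution of the frozen operator \(L_0=A_0\partial_{11}+\sum_{i\ge2}\partial_{ii}\). Away from \(0\),
\[
L\Gamma_0=(A-A_0)\partial_{11}\Gamma_0,
\qquad
|L\Gamma_0|\le C\,\omega(|x|)\,|x|^{-N}.
\]
I correct this with a radial factor:
\[
\Psi(x)=\Gamma_0(x)\bigl(1-\theta(|x|)\bigr),
\qquad
\theta(r)=C_0\int_0^r\frac{\omega(s)}{s}\,ds .
\]
By the Dini hypothesis, \(\theta(r)\to0\) as \(r\to0\), so \(\Psi\) is comparable to \(\Gamma_0\) on a small ball \(B_{r_1}\). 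Differentiating the factor produces a term of size \(\omega(|x|)|x|^{-N}\) with a favorable sign, because \(\Gamma_0\) is radially decreasing in the \(\tilde x\)-metric and \(\theta'\ge0\). Choosing \(C_0\) large then gives \(L\Psi\le0\) a.e. in \(B_{r_1}\setminus\{0\}\). I expect this computation to be the main obstacle. One must check that the cross terms from \(\partial_{11}\) applied to the product have the right sign in the anisotropic variable, or else are absorbed. If the naive factor fails, the fallback is an ODE-in-\(r\) correction, or replacing \(|x|\) by \(|\tilde x|\) inside \(\theta\). In every variant the integrability of \(\omega(r)/r\) is exactly what keeps the correction bounded.

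\emph{Step 2 (test function).} For \(t\) large, so that \(\{\Psi>t\}\Subset B_{r_1}\), and \(T\gg t\), set
\[
\varphi=\min\bigl\{(\Psi-t)_+,\;T-t\bigr\}.
\]
This is Lipschitz, compactly supported, and smooth off the two level surfaces \(S_t=\{\Psi=t\}\) and \(S_T=\{\Psi=T\}\). Its distributional Hessian has three parts: an absolutely continuous part equal to \(D^2\Psi\) on \(\{t<\Psi<T\}\), a nonnegative singular part \(|D\Psi|\,\nu\otimes\nu\,d\mathcal H^{N-1}\) on \(S_t\), and a nonpositive one of the same form on \(S_T\). Because \(\nu\otimes\nu\ge0\) and \(\lambda\le A\le\Lambda\), each kink contributes a measure comparable to \(|D\Psi|\,\mathcal H^{N-1}\), with a definite sign.

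\emph{Step 3 (conclusion).} The identity \(\int Z\,L\varphi=0\), combined with \(Z\ge0\) and \(L\Psi\le0\) in the annular region, gives
\[
\lambda\int_{S_t}Z\,|D\Psi|\,d\mathcal H^{N-1}
\le\Lambda\int_{S_T}Z\,|D\Psi|\,d\mathcal H^{N-1}.
\]
Since \(\Psi\simeq\Gamma_0\), the surface \(S_T\) is a small near-ellipsoid of radius \(\rho\sim T^{-1/(N-2)}\), or \(\rho\sim e^{-T}\) when \(N=2\). The Dini correction from Step 1 makes \(|D\Psi|\simeq\rho^{1-N}\) there, so \(\int_{S_T}|D\Psi|\,d\mathcal H^{N-1}\le C\) uniformly in \(T\). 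Hence the right side is at most \(C\max_{S_T}Z\), which tends to \(Z(0)=0\) by continuity as \(T\to\infty\). Therefore \(\int_{S_t}Z\,|D\Psi|=0\) for every large \(t\). Since \(|D\Psi|>0\) and \(Z\) is continuous and nonnegative, \(Z=0\) on each \(S_t\). The surfaces \(S_t\) sweep out a punctured neighborhood of the origin, so \(Z\equiv0\) near \(0\).
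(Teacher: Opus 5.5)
There is a genuine gap: the key inequality in Step 3 has the wrong sign. Testing with \(\varphi=\min\{(\Psi-t)_+,T-t\}\) gives
\[
F_t-F_T=-\int_{\{t<\Psi<T\}}Z\,L\Psi\,dx,
\qquad
F_s=\int_{S_s}Z\,\bigl(A\nu_1^2+|\nu'|^2\bigr)\,|D\Psi|\,d\mathcal H^{N-1}.
\]
This identity is formal, because \(A\) has no trace on \(S_s\). To make it rigorous, smooth the kinks in \(t\); only the bounds \(\min\{\lambda,1\}\le A\nu_1^2+|\nu'|^2\le\max\{\Lambda,1\}\) survive, and they suffice. So \(L\Psi\le0\) gives \(F_t\ge F_T\), which carries no information. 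Step 3 needs a \emph{subsolution} barrier, \(L\Psi\ge0\) on a punctured ball. The harmonic case, where \(L\Gamma=0\) gives \(F_t=F_T\), is just the mean value property.

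Your multiplicative ansatz does not give a definite sign either. Normalize \(A_0=1\), so \(L=\Delta+b\,\partial_{11}\) with \(|b|\le\omega\), and take \(\Gamma=c_Nr^{2-N}\) (resp.\ \(c_2\log(1/r)\)). The terms created by the factor are \(-2D\Gamma\cdot D\theta-\Gamma\Delta\theta\). The cross term is \emph{positive}, precisely because \(\Gamma\) decreases and \(\theta\) increases. The sum is
\[
c_NC_0\bigl((N-2)\,\omega-r\,\omega'\bigr)r^{-N}\quad(N\ge3),
\qquad
c_2C_0\bigl(2\,\omega-r\log(1/r)\,\omega'\bigr)r^{-2}\quad(N=2).
\]
Take \(\omega=Cr^\alpha\), the case used in Proposition~\ref{degeneracy}. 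For \(N\ge3\) this makes \(\Psi\) a subsolution, so your two sign errors happen to cancel. For \(N=2\) it makes \(\Psi\) a supersolution, and the argument fails. In general the sign is governed by \(\omega'\), which the Dini hypothesis does not control.

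A barrier that works for every \(N\ge2\) is an additive correction, \(\Psi=\Gamma+\phi(|x|)\) with \(\phi'(r)=r^{1-N}\theta(r)\). Its properties:
\begin{itemize}
\item \(\Delta\phi=C_0\,\omega(r)\,r^{-N}\) a.e.;
\item \(|\phi|=o(\Gamma)\) and \(|\phi'|=o(|\Gamma'|)\), since \(\theta(r)\to0\);
\item \(|D^2\phi|\le C(\theta+C_0\omega)r^{-N}\).
\end{itemize}
Hence \(L\Psi\ge(C_0-C)\,\omega\,r^{-N}\ge0\) near the origin, with no derivative of \(\omega\) involved. The level sets of this \(\Psi\) are spheres, and your Steps~2--3 then go through.

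This corrected barrier is essentially the integrated form of the paper's proof. The paper tests with truncations of \(\Gamma\) itself and bounds the \(\partial_{11}(bZ)\) contribution by \(C\int_0^r\omega(s)H(s)\,ds/s\) for the spherical means \(H\). It closes with Gr\"onwall, and handles the trace issue through Lebesgue points of the radial averages.
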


\begin{proof}
Write \(X\) for the original variable and introduce \(y\) by
\[
 X_1=\sqrt{A_0}\,y_1,
 \qquad X'=y'.
\]
Define
\[
 \widetilde Z(y)=Z(\sqrt{A_0}y_1,y'),
 \qquad
 \widetilde A(y)=A(\sqrt{A_0}y_1,y').
\]
Since
\(\partial_{X_1}=A_0^{-1/2}\partial_{y_1}\), the original equation
becomes
\[
 \partial_{11}
 \left(\frac{\widetilde A}{A_0}\widetilde Z\right)
 +\sum_{i=2}^N\partial_{ii}\widetilde Z=0.
\]
After shrinking to a ball contained in the transformed ellipsoid and
relabeling \(y\) as \(x\) and \(\widetilde Z\) as \(Z\), this equation
becomes
\begin{equation}
 \Delta Z+\partial_{11}(bZ)=0
 \qquad\text{in }\mathcal D',
 \label{adjoint}
\end{equation}
where
\[
 b(x)=\frac{\widetilde A(x)}{A_0}-1.
\]
After changing \(b\) on a null set if necessary, the
essential-supremum bound gives
\[
|b(x)|\le \widetilde\omega(|x|)
\quad\text{for a.e. }x,
\]
where
\[
\widetilde\omega(r)=C\omega(Cr)
\]
is again a nondecreasing Dini modulus.
We henceforth rename \(\widetilde\omega\) as \(\omega\).
We shall prove that the spherical mean of \(Z\) vanishes.  
Let \(q=bZ\), and let \(d\sigma\) denote normalized surface measure on
\(\mathbb S^{N-1}\).  Define
\begin{equation*}
 H(s)
 =
 \int_{\mathbb S^{N-1}}Z(s\vartheta)\,d\sigma(\vartheta).
\end{equation*}
Then \(H\) is continuous and nonnegative, and
\[
 H(s)\longrightarrow Z(0)=0
 \qquad(s\downarrow0).
\]
For almost every \(s\), also define
\begin{equation*}
 \begin{split}
 Q(s)
 &=
 \int_{\mathbb S^{N-1}}
 \vartheta_1^2q(s\vartheta)\,d\sigma(\vartheta),\\
 P(s)
 &=
 \int_{\mathbb S^{N-1}}
 (1-\vartheta_1^2)q(s\vartheta)\,d\sigma(\vartheta).
 \end{split}
\end{equation*}

Let \(\Gamma\) be the fundamental solution of \(-\Delta\), normalized
by
\begin{equation*}
 \Gamma'(s)
 =
 -\frac1{|\mathbb S^{N-1}|s^{N-1}}.
\end{equation*}
This includes the logarithmic fundamental solution for \(N=2\).
Choose \(0<\varepsilon<r<r_0\) that are Lebesgue points of both
\(P\) and \(Q\).  Put
\[
 g(s)=\Gamma'(s)\mathbf 1_{(\varepsilon,r)}(s),
\]
let \(g_\delta\) be a one-dimensional smooth mollification, with
\(\delta<\varepsilon/4\), and define
\begin{equation*}
 \psi_\delta(s)
 =
 -\int_s^\infty g_\delta(\rho)\,d\rho.
\end{equation*}
For \(\delta\) small, \(\psi_\delta(|x|)\) is smooth, constant near
the origin, compactly supported inside the domain, and is therefore
an admissible test function in \eqref{adjoint}.

For a radial function \(\psi=\psi(s)\),
\begin{equation*}
 \partial_{11}\psi
 =
 \psi''\vartheta_1^2
 +\frac{\psi'}s(1-\vartheta_1^2),
 \qquad
 \Delta\psi
 =
 \psi''+\frac{N-1}{s}\psi'.
\end{equation*}
Testing \eqref{adjoint} with
\(\psi_\delta(|x|)\), using polar coordinates, and then letting
\(\delta\downarrow0\), we obtain
\begin{equation}
 \begin{split}
 0={}&
 H(r)-H(\varepsilon)
 +\int_{\{\varepsilon<|x|<r\}}
 q(x)\Gamma_{11}(x)\,dx\\
 &\quad-Q(\varepsilon)+Q(r).
 \end{split}
 \label{fund}
\end{equation}
Before passing to the
limit, the two integrals in the weak equation are
\begin{equation*}
 \begin{split}
 \int q\,\partial_{11}\psi_\delta
 &=
 |\mathbb S^{N-1}|
 \int_0^\infty
 \left(
 s^{N-1}g_\delta'(s)Q(s)
 +s^{N-2}g_\delta(s)P(s)
 \right)\,ds,\\
 \int Z\,\Delta\psi_\delta
 &=
 |\mathbb S^{N-1}|
 \int_0^\infty
 \left(
 s^{N-1}g_\delta'(s)
 +(N-1)s^{N-2}g_\delta(s)
 \right)H(s)\,ds.
 \end{split}
\end{equation*}
The interior terms converge to the integral involving
\(\Gamma_{11}\).  The jumps at \(\varepsilon\) and \(r\), together
with
\[
 |\mathbb S^{N-1}|s^{N-1}\Gamma'(s)=-1,
\]
give exactly the four boundary terms in
\eqref{fund}.  
Since \(\varepsilon\) and \(r\) are Lebesgue points of \(P\) and
\(Q\), the mollified radial integrals converge to the displayed
boundary terms.  Thus no trace of \(q\) on either sphere is required.

The classical Hessian of the fundamental solution satisfies
\[
 |\Gamma_{11}(x)|\le C_N|x|^{-N}.
\]
Using the pointwise bound for \(b\), \(Z\ge0\), and polar
coordinates, we find
\[
 \int_{B_r}|q|\,|\Gamma_{11}|\,dx
 \le
 C\int_0^r\omega(s)H(s)\,\frac{ds}{s}
 <\infty.
\]
At every good radius,
\[
 |Q(s)|\le\omega(s)H(s).
\]
Let \(\varepsilon\downarrow0\) through good radii in
\eqref{fund}.  The preceding bounds and
the convergence \(H(s)\to0\) give, for almost every sufficiently small
good \(r\),
\begin{equation*}
 H(r)
 +\int_{B_r}q(x)\Gamma_{11}(x)\,dx
 +Q(r)=0.
\end{equation*}
Therefore
\begin{equation*}
 H(r)
 \le
 C\int_0^r\omega(s)H(s)\,\frac{ds}{s}
 +\omega(r)H(r).
\end{equation*}
Since the Dini assumption implies \(\omega(r)\to0\), the last term can
be absorbed after reducing \(r_0\):
\[
 H(r)
 \le
 C\int_0^r\omega(s)H(s)\,\frac{ds}{s}
 \quad\text{for a.e. }r<r_0.
\]

Set
\[
 \mathcal H(r)
 =
 \int_0^r\omega(s)H(s)\,\frac{ds}{s}.
\]
It is absolutely continuous, \(\mathcal H(0)=0\), and
\[
 \mathcal H'(r)
 =
 \frac{\omega(r)}rH(r)
 \le
 C\frac{\omega(r)}r\mathcal H(r)
 \quad\text{for a.e. }r.
\]
For \(0<\delta<r\), by Gr\"onwall's inequality,
\[
 \mathcal H(r)
 \le
 \mathcal H(\delta)
 \exp\left(
 C\int_\delta^r\frac{\omega(s)}s\,ds
 \right).
\]
The exponential remains uniformly bounded as \(\delta\downarrow0\),
while \(\mathcal H(\delta)\to0\).  Thus
\(\mathcal H\equiv0\) near zero.  The preceding integral inequality
and continuity imply \(H\equiv0\).  Because \(Z\ge0\) is continuous, the
vanishing of all its spherical averages forces \(Z\equiv0\) in a
neighborhood of the origin.
\end{proof}

\begin{proposition}
\label{degeneracy}
Assume the hypotheses of Proposition~\ref{regularity}.  If
\(D^2w(y_0)\) is singular at some
\(y_0\in B_{\kappa R/8}\), then there are a neighborhood \(V\) of
\(y_0\) and a fixed orthogonal projection \(P\) onto an
\((n-1)\)-dimensional subspace such that
\[
D^2w=\kappa^{-1}P
\qquad\text{in }V.
\]
Consequently,
\begin{equation}
w(y)
a+b\cdot y+\frac1{2\kappa}|P(y-y_0)|^2,\quad y\in V.
\label{cylinder}
\end{equation}
\end{proposition}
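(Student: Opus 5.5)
The plan is to build from $w$ a nonnegative continuous function $Z$ that vanishes exactly where $D^2w$ is singular and satisfies an adjoint equation of the form in Lemma~\ref{dini}. That lemma then makes $Z$ vanish near $y_0$, and \eqref{sing} together with a rigidity argument yields \eqref{cylinder}.

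\textbf{Step 1 (first change of variables: partial Legendre transform).} Rotate so that $D^2w(y_0)=\kappa^{-1}(I-e_1\otimes e_1)$, and write $y=(y_1,y')$. By Proposition~\ref{regularity}, $D^2w$ is $C^{\alpha}$. Hence on a small ball around $y_0$ we have $D^2_{y'y'}w\ge(2\kappa)^{-1}I$, so $y'\mapsto w(y_1,y')$ is uniformly convex there. We take the partial Legendre transform in $y'$:
\[
v(y_1,p')=\sup_{y'}\{p'\cdot y'-w(y_1,y')\},
\]
which is $C^{2,\alpha}$ in $(y_1,p')$. The standard identities for partial Legendre transforms are
\[
v_{11}=-\frac{\det D^2w}{\det D^2_{y'y'}w},
\qquad
D^2_{p'p'}v=(D^2_{y'y'}w)^{-1},
\qquad
v_{1p'}=\text{(the $w_{1y'}$ terms)}.
\]
We set $Z=-v_{11}\ge0$. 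This $Z$ is continuous and vanishes exactly at the images of singular points of $D^2w$.

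\textbf{Step 2 (second change of variables and the adjoint equation).} Next we rewrite the dual equation \eqref{dual-ae} in terms of $D^2v$. The model is $n=2$: there $\det D^2w=g$ becomes $v_{11}+g\,v_{22}=0$, that is $AZ=v_{22}$ with $A=1/g$. Differentiating twice in $y_1$ then gives $\partial_{11}(AZ)+\partial_{22}Z=0$ in $\mathcal D'$.

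In general dimension, \eqref{dual-ae} is linear in $\sigma_n(D^2w)$, and $\sigma_n(D^2w)=-v_{11}\det D^2_{y'y'}w$. So the equation takes the form $A\,Z=\Psi(D^2_{p'p'}v,v_{1p'})$. Here $A$ collects $a(y)$, $\kappa$ and smooth functions of the Hessian. The expected main obstacle is to choose the second change of variables, namely a reparametrization of $p'$ and a renormalization of $Z$, so that $\Psi$ becomes, up to harmless terms, a Laplacian in $p'$ acting on $v$. Differentiating in $y_1$ twice then produces $\partial_{11}(AZ)+\sum_{i\ge2}\partial_{ii}Z=0$.

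For this step I would first try to exploit that the $n-1$ nonzero eigenvalues are pinned at $\kappa^{-1}$ on the singular set. This should make the $p'$-dependence affine to leading order, and the cross terms $v_{1p'}$ should be absorbable into a divergence-form identity. All identities are verified in $\mathcal D'$ using only $C^{2,\alpha}$ regularity.

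\textbf{Step 3 (Dini modulus and conclusion).} Ellipticity bounds $\lambda\le A\le\Lambda$ follow from $D^2w\in\mathcal K$ and the smallness of $a$. The coefficient $A$ is a smooth function of $a(y)=f(Dw(y))-c$ and of $D^2w$. Both are $C^{\alpha}$, by the H\"older bound on $f$, the Lipschitz bound on $Dw$ in \eqref{C11}, and \eqref{scaled}. Taking $A_0$ to be the value of $A$ at the base point, we get $\omega(r)=Cr^{\alpha}$, which is Dini. Lemma~\ref{dini} then gives $Z\equiv0$ near the base point, so $\det D^2w\equiv0$ on a neighborhood $V$ of $y_0$.

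By \eqref{sing}, on $V$ we have $D^2w=\kappa^{-1}(I-e(y)\otimes e(y))$ with $e$ H\"older. To show that $e$ is constant, I would use that $Dw$ has rank $n-1$ with constant nonzero eigenvalue $\kappa^{-1}$. Since $D(\kappa Dw)=I-e\otimes e$, the map $\kappa Dw-y$ has differential $-e\otimes e$. This map therefore has rank-one differential pointing along $e$, and its image is a curve. Mollifying and using the symmetry of third derivatives of $w$ (the identity $e_i\partial_k e_j+e_j\partial_k e_i$ symmetric in $i,k$, contracted with $e$) forces $\partial_k e=0$ in the distributional sense. Hence $e$ is constant and $P=I-e\otimes e$ is fixed. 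Integrating $D^2w=\kappa^{-1}P$ twice on the convex set $V$ yields \eqref{cylinder}.
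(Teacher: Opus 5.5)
Your Step~1 is the paper's first change of variables: your $Z=-v_{11}$ is the paper's Schur complement $s=h_{tt}$, with $x$ for your $p'$ and $h=-v+\frac{\kappa}{2}|x|^2$. The gap is Step~2. You flag it as the main obstacle, it is the heart of the proof, and it is left open; the route you sketch for it does not work as stated. In the paper's notation ($p=D_xh_t$, $T=-D_x^2h$), the equation after the partial Legendre transform is
\[
 (1+|p|^2-\kappa s)\tr T=s\bigl(f-\sigma_2(T)\bigr)+\kappa|p|^2+p^TTp .
\]
On the singular set $s=0$ but $T=\kappa\,p\otimes p$, which need not vanish. So the transverse Laplacian is not proportional to $Z$ there. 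Lemma~\ref{dini} needs an exact identity $AZ=-\Delta_x(\cdot)$ with $A$ bounded above and below, so no ``harmless terms'' can be tolerated, and nothing in your sketch removes $\kappa|p|^2$. Your $n=2$ model hides this. The dual equation there is $\kappa\tr D^2w-1+a\det D^2w=0$, not $\det D^2w=g$, and its partial Legendre transform is $(1-\kappa s)T=sf+\kappa p^2$, which contains exactly the offending term. The missing idea is a second Lewy-type transform in the degenerate variable $t$, not in $p'$: set $\theta=t-\kappa h_t$ and $U=h-\frac{\kappa}{2}h_t^2$. With $d=1-\kappa s$ this gives $Z=U_{\theta\theta}=s/d$, $\zeta=D_xU_\theta=p/d$, and $R=-D_x^2U=T-\frac{\kappa}{d}p\otimes p$, which does vanish on the singular set. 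The equation becomes $(1+|\zeta|^2)\tr R=Z\bigl(f-\sigma_2(R)\bigr)+\zeta^TR\zeta$.

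You also never use the convexity of $u$, and it is indispensable. Write $D^2u=(D^2w)^{-1}-\kappa I\ge0$ in the new variables; its Schur complement with respect to the upper-left entry $Z^{-1}$ is $R$, so $R\ge0$. Only then does $\beta=\tr R/Z$ (set equal to the base value $f_0$ on $\{Z=0\}$) satisfy $\hat f/(1+|\zeta|^2)\le\beta\le\hat f$, where $\hat f=f(U_\theta,x)-\sigma_2(R)$. Hence $\beta Z=-\Delta_xU$ exactly, and $\partial_{\theta\theta}(\beta Z)+\Delta_xZ=0$.

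This also shows that Step~3 misdescribes the coefficient. It is not a smooth, hence $C^\alpha$, function of $a$ and $D^2w$. It is a ratio of two quantities that vanish at the base point, so it is merely bounded and measurable. Its Dini modulus at the base point comes from the pinching above together with $|\zeta|+|R|\le Cr^\alpha$. That is why Lemma~\ref{dini} uses an essential-supremum modulus at a single point.

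With this structure the paper's endgame is immediate. $R\ge0$ and $\tr R=0$ give $R\equiv0$, and $U_{\theta\theta}=D_x^2U=0$ force $\zeta$ to be constant, hence zero, which yields the fixed projection. Your separate argument that $e$ is constant is correct as a formal computation. Mollifying, however, destroys the rank-one structure, so it does not make the argument rigorous. Instead note that $\Delta w\equiv(n-1)/\kappa$ on $V$, so $w$ is real-analytic there and your computation applies directly.
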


\begin{proof}
After translating and rotating coordinates, assume that
\(y_0=0\), \(w(0)=0\), \(Dw(0)=0\), and
\[
D^2w(0)
=
\kappa^{-1}
\begin{pmatrix}
0&0\\
0&I_{n-1}
\end{pmatrix}.
\]
Since \(w\in C^{2,\alpha}\), every point is an Alexandrov point.
Hence \eqref{sing} shows that every singular Hessian has the form
\[
D^2w=\kappa^{-1}(I-\nu\otimes\nu).
\]
At points where \(D^2w>0\), we have
\[
\sigma_2\bigl((D^2w)^{-1}-\kappa I\bigr)=f(Dw).
\]
We now isolate the one-dimensional degeneracy.
Write
\[
 y=(t,\eta)\in\R\times\R^m,
 \qquad m=n-1.
\]
By the value of \(D^2w(0)\) above and continuity, after shrinking
the neighborhood the transverse block
\[
 B=D_{\eta\eta}^2w
\]
is uniformly positive definite.  Hence
\[
 (t,\eta)\longmapsto(t,x),
 \qquad
 x=D_\eta w(t,\eta),
\]
is a local \(C^{1,\alpha}\) diffeomorphism.  Let
\(\eta=\eta(t,x)\) be its inverse and define
\[
 h(t,x)
 =
 w(t,\eta(t,x))-x\cdot\eta(t,x)
 +\frac{\kappa}{2}|x|^2.
\]
Then \(h\in C^{2,\alpha}\).  Set
\begin{equation*}
 v=h_t,\qquad
 s=h_{tt},\qquad
 p=D_xh_t,\qquad
 T=-D_x^2h,\qquad
 \tau=\tr T.
\end{equation*}
If
\[
 a=w_{tt},
 \qquad
 b=D_\eta w_t,
\]
by differentiating these definitions, we obtain
\[
 h_t=w_t,\qquad
 D_xh=\kappa x-\eta,\qquad
 D_x\eta=B^{-1},\qquad
 \eta_t=-B^{-1}b.
\]
Consequently,
\begin{equation}
 p=B^{-1}b,\qquad
 s=a-b^TB^{-1}b,\qquad
 T=B^{-1}-\kappa I.
 \label{first}
\end{equation}
Equivalently,
\begin{equation}
 B=(T+\kappa I)^{-1},
 \qquad
 b=Bp,
 \qquad
 a=s+p^TBp,
 \label{first-inv}
\end{equation}
and
\begin{equation}
 D^2w
 =
 \begin{pmatrix}
 s+p^TBp&p^TB\\
 Bp&B
 \end{pmatrix}.
 \label{wblock}
\end{equation}
In particular \(s\ge0\), because it is the Schur complement of \(B\)
in the nonnegative matrix \(D^2w\).

At a point where \(s>0\), the matrix in
\eqref{wblock} is positive definite, and
\begin{equation}
 A=(D^2w)^{-1}-\kappa I
 =
 \begin{pmatrix}
 s^{-1}-\kappa&-s^{-1}p^T\\
 -s^{-1}p&T+s^{-1}p\otimes p
 \end{pmatrix}.
 \label{Ablock}
\end{equation}
Furthermore,
\begin{equation*}
 Dw=(v,x).
\end{equation*}
Using
\[
 \sigma_2
 \begin{pmatrix}
 a_0&b_0^T\\
 b_0&C_0
 \end{pmatrix}
 =
 a_0\tr C_0+\sigma_2(C_0)-|b_0|^2
\]
in \(\sigma_2(A)=f(Dw)\), we calculate
\begin{equation}
 (1+|p|^2-\kappa s)\tau
 =
 s\bigl(f(v,x)-\sigma_2(T)\bigr)
 +\kappa|p|^2+p^TTp.
 \label{first-eq}
\end{equation}
When \(m=1\), the convention is
\(\sigma_2(T)=\sigma_2(R)=0\) for the \(1\times1\) matrices occurring
in the two transformed equations below.

Equation \eqref{first-eq} remains valid at every
point where \(s=0\).  Indeed, by
\eqref{wblock} and \(B>0\), the equality \(s=0\) is
equivalent to singularity of \(D^2w\).  
Write the unit vector in \eqref{sing} as
\(e=(e_1,e').\)
Positivity of \(B\) implies \(e_1\ne0\).  Substituting this expression
into \eqref{first}, we obtain
\[
s=0,\qquad
p=-\frac{e'}{e_1},
\qquad
T=\kappa p\otimes p.
\]
Both sides of \eqref{first-eq} are then equal to
\(\kappa|p|^2(1+|p|^2)\).  Thus no density assertion about the set
\(\{s>0\}\) is being used.

Let \(d=1-\kappa s.\)
At the origin \(s=0\). After another shrinking we may assume \(d\ge\frac12.\)
Define
\begin{equation*}
 \theta=t-\kappa v(t,x),
 \qquad
 U(\theta,x)
 =
 h(t,x)-\frac{\kappa}{2}v(t,x)^2.
\end{equation*}
Since
\[
 \theta_t=d,\qquad D_x\theta=-\kappa p,
\]
the map \((t,x)\mapsto(\theta,x)\) is a local
\(C^{1,\alpha}\) diffeomorphism, whose inverse satisfies
\begin{equation*}
 t_\theta=\frac1d,
 \qquad
 D_xt=\frac{\kappa p}{d}.
\end{equation*}
Differentiating \(U\) at fixed \((\theta,x)\), we obtain
\[
 U_\theta=v,\qquad D_xU=D_xh.
\]
Moreover, we have
\(U\in C^{2,\alpha}\).
Let
\begin{equation*}
 Z=U_{\theta\theta},
 \qquad
 \zeta=D_xU_\theta,
 \qquad
 R=-D_x^2U.
\end{equation*}
We have
\begin{equation}
 Z=\frac{s}{d},
 \qquad
 \zeta=\frac{p}{d},
 \qquad
 R=T-\frac{\kappa}{d}p\otimes p.
 \label{second}
\end{equation}
In particular \(Z\ge0\).

If \(Z>0\), substituting
\eqref{second} into
\eqref{Ablock}, we obtain
\[
 A
 =
 \begin{pmatrix}
 Z^{-1}&-Z^{-1}\zeta^T\\
 -Z^{-1}\zeta&R+Z^{-1}\zeta\otimes\zeta
 \end{pmatrix}.
\]
Since \(A\ge0\), its Schur complement with respect to the upper-left
entry satisfies
\[
 R\ge0\quad\text{on }\{Z>0\}.
\]
If \(Z=0\), then \(s=0\); \eqref{second} give \(R=0\).  Therefore \(G\ge 0\) throughout the transformed neighborhood.

For any symmetric \(R\), vector \(\zeta\), and scalar \(\rho\),
\[
 \sigma_2(R+\rho\zeta\otimes\zeta)
 =
 \sigma_2(R)
 +\rho\bigl(|\zeta|^2\tr R-\zeta^TR\zeta\bigr).
\]
Thus on \(\{Z>0\}\), we have
\begin{equation}
 (1+|\zeta|^2)\tr R
 =
 Z\bigl(f(U_\theta,x)-\sigma_2(R)\bigr)
 +\zeta^TR\zeta.
 \label{second-eq}
\end{equation}
On \(\{Z=0\}\), by substituting
\eqref{sing}, we have \(R=0\), so
both sides vanish.  Thus
\eqref{second-eq} holds pointwise everywhere.
At the origin, \(Z(0)=0,\zeta(0)=0,\) and \(R(0)=0.\)

To apply Lemma~\ref{dini}, let
\begin{equation*}
 \hat f(\theta,x)
 =
 f(U_\theta(\theta,x),x)-\sigma_2(R(\theta,x)),
 \qquad
 f_0=f(0)>0.
\end{equation*}
Here \(f_0=f(0)\) by our normalization at the origin.  By the values
of \(Z\), \(\zeta\), and \(R\) at the origin and continuity, after
shrinking the neighborhood,
\[
 0<\mu\le\hat f\le M_0.
\]
On \(\{Z>0\}\), define
\begin{equation*}
 \beta=\frac{\tr R}{Z}.
\end{equation*}
Since \(R\ge0\),
\[
 0\le\zeta^TR\zeta\le|\zeta|^2\tr R,
\]
and by \eqref{second-eq},
\begin{equation}
 \frac{\hat f}{1+|\zeta|^2}
 \le\beta\le\hat f
 \qquad\text{on }\{Z>0\}.
 \label{beta-bounds}
\end{equation}
On \(\{Z=0\}\), define \(\beta=f_0\).  Since \(R=0\) on that set,
\(\beta\) is Borel measurable and
\begin{equation}
 \beta Z=\tr R=-\Delta_xU.
 \label{betaZ}
\end{equation}
The preceding bounds for \(\hat f\) and
\eqref{beta-bounds} imply, after shrinking once more,
\[
 0<\lambda\le\beta\le\Lambda.
\]

Because \(D^2U(0)=0\) and \(U\in C^{2,\alpha}\),
\[
 |Z(\theta,x)|+|\zeta(\theta,x)|+|R(\theta,x)|
 \le C|(\theta,x)|^\alpha.
\]
The map
\[
 (\theta,x)\longmapsto(U_\theta(\theta,x),x)
\]
is locally Lipschitz.  Hence the H\"older continuity of \(f\) and the
preceding estimate, together with
\[
 |\sigma_2(R(\theta,x))|
 \le C|(\theta,x)|^{2\alpha},
\]
give
\begin{equation*}
 |\mathfrak f(\theta,x)-f_0|
 \le C|(\theta,x)|^\alpha.
\end{equation*}
Using both sides of \eqref{beta-bounds}, and the definition of
\(\beta\) on \(\{Z=0\}\), we obtain
\begin{equation}
 \operatorname*{ess\,sup}_{B_r}|\beta-f_0|
 \le Cr^\alpha.
 \label{beta-mod}
\end{equation}
Indeed, on \(\{Z>0\}\),
\[
 |\beta-f_0|
 \le
 |\mathfrak f-f_0|+C|\zeta|^2,
\]
and on \(\{Z=0\}\) the left-hand side is zero.

Finally, \eqref{betaZ} and \(Z=U_{\theta\theta}\) imply, in the
sense of distributions,
\begin{equation}
 \partial_{\theta\theta}(\beta Z)+\Delta_xZ
 =
 -\partial_{\theta\theta}\Delta_xU
 +\Delta_xU_{\theta\theta}
 =0.
 \label{Zeq}
\end{equation}

Apply Lemma~\ref{dini} to
\eqref{Zeq}.
The uniform bounds for \(\beta\) and
\eqref{beta-mod} verify the ellipticity and the Dini
condition at the base point, with \(\omega(r)=Cr^\alpha\).  Since \(Z\ge0\) and
\(Z(0)=0\), the lemma implies
\(Z\equiv0\)
in a smaller neighborhood.

The identity \eqref{betaZ} and the vanishing of \(Z\) give
\(\tr R=0\).  Since \(R\ge0\),
\(
 R\equiv0.\)
On a product cylinder contained in the transformed neighborhood,
\[
 U_{\theta\theta}=0,
 \qquad
 D_x^2U=0.
\]
The first equality implies
\[
 U(\theta,x)=\theta g(x)+k(x).
\]
Substituting into the second equality for every \(\theta\) in an
interval, we obtain
\[
 D_x^2g=D_x^2k=0.
\]
Thus \(g\) and \(k\) are affine, and
\(\zeta=D_xU_\theta=Dg\) is constant.  Since \(\zeta(0)=0\),
\(\zeta\equiv0.\)

The identities \eqref{second} invert to
\begin{equation*}
 s=\frac{Z}{1+\kappa Z},
 \qquad
 d=\frac1{1+\kappa Z},
 \qquad
 p=d\zeta,
 \qquad
 T=R+\frac{\kappa}{d}p\otimes p.
\end{equation*}
The three vanishing conclusions above therefore imply
\[
 s\equiv0,\qquad p\equiv0,\qquad T\equiv0.
\]
Using
\eqref{first-inv}--\eqref{wblock},
we conclude that
\[
 D^2w
 =
 \begin{pmatrix}
 0&0\\
 0&\kappa^{-1}I_{n-1}
 \end{pmatrix}
\]
throughout a neighborhood in the original \((t,\eta)\) variables.
The two changes of variables above are local diffeomorphisms, so this
is an open neighborhood of \(y_0\).  Integrating the constant Hessian
proves \eqref{cylinder}.
\end{proof}

\begin{proposition}
\label{positive}
Under the assumption of
Proposition~\ref{regularity}, we have \(D^2w(y)>0\)
for every \(y\in B_{\kappa R/8}\).
\end{proposition}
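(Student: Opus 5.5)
We argue by contradiction, combining Proposition~\ref{degeneracy} with Fenchel duality and Mooney's strict \(2\)-convexity theorem. Suppose \(D^2w(y_0)\) is singular for some \(y_0\in B_{\kappa R/8}\). By Proposition~\ref{degeneracy} there is a neighborhood \(V\) of \(y_0\), which we shrink to a small ball, and an orthogonal projection \(P\) of rank \(n-1\) such that \(w\) has the cylindrical form \eqref{cylinder} on \(V\). Let \(e\) span \(\ker P\). Then \(Dw(y)=b+\kappa^{-1}P(y-y_0)\) on \(V\). Hence \(Dw\) maps \(V\) onto a relatively open piece of the affine hyperplane
\[
\Pi=\{x:\ e\cdot(x-b)=0\},
\]
which is \((n-1)\)-dimensional. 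By \eqref{image}, this piece lies in \(B_R\). Call it \(S=Dw(V)\subset\Pi\cap B_R\).

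Next we transfer this to \(u\). For \(y\in V\), \(x=Dw(y)\) is the unique maximizer in \eqref{dualdef}, so \(y\in\partial\Phi(x)\), and \(\Phi(x)+w(y)=x\cdot y\). Fix \(x_1=Dw(y_0)=b\). Moving along \(y=y_0+te\) with \(|t|\) small, we stay in \(V\) and \(Dw(y)=b\) is constant. Thus the whole segment \(\{y_0+te\}\) lies in \(\partial\Phi(b)\). Conversely, consider points \(x=b+\kappa^{-1}P(y-y_0)\in S\). Since \(\Phi(x)=x\cdot y-w(y)\) and \(w\) is explicit on \(V\), we can compute \(\Phi\) on \(S\) directly: it equals \(\frac{\kappa}{2}|x-b|^2\) plus an affine function of \(x\). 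Therefore
\[
u=\Phi-\frac{\kappa}{2}|x|^2
\]
coincides on \(S\) with an affine function \(\ell\), because the quadratic parts cancel along the hyperplane.

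It remains to see that \(\ell\) is a supporting affine function of \(u\), not merely equal to \(u\) on \(S\). At each \(x\in S\) we have \(\partial\Phi(x)\ni y\) with \(y-y_0\in\) range of \(P\) up to an \(e\)-component. Then \(\partial u(x)\ni y-\kappa x\). Using the identity \(y-\kappa x=y_0-\kappa b+(I-P)(y-y_0)\), the subgradients of \(u\) at all points of \(S\) share a common element \(p_0=y_0-\kappa b\); we take the representative \(y\) with \((I-P)(y-y_0)=0\). Hence \(u\ge u(x)+p_0\cdot(\cdot-x)\) for every \(x\in S\). Since \(u\) agrees with \(\ell\) on \(S\), the affine function \(\ell(x')=u(b)+p_0\cdot(x'-b)\) supports \(u\) and touches it on the \((n-1)\)-dimensional set \(S\). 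This contradicts Mooney's strict \(2\)-convexity theorem \cite{Mooney}: the contact set of a convex viscosity solution of \(\sigma_2=f\) with \(f\) positive and bounded, cut by a supporting hyperplane, has dimension less than \(n-1\).

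The main obstacle is the applicability of Mooney's theorem as used here. Mooney's result is stated for constant, or bounded between positive constants, right-hand side, and we must check that it applies with \(m\le f\le M\) on \(B_{2R}\); the contact set \(S\) is interior since \(S\subset B_R\). If Mooney's theorem is available only for a bounded measurable right-hand side in the sense of a two-sided inequality \(m\le\sigma_2\le M\) in the viscosity sense, this is exactly our situation. The bookkeeping of \(\partial\Phi\) versus \(\partial u\) in the third paragraph is routine.
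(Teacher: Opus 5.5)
Your proposal is correct and follows essentially the same route as the paper. In both, the cylinder form from Proposition~\ref{degeneracy} and the slice of $V$ through $y_0$ make $y-\kappa Dw(y)$ the constant vector $y_0-\kappa b$. This gives a common subgradient, hence a supporting affine function touching $u$ on the $(n-1)$-dimensional set $Dw(V)\subset B_R$, and Mooney's theorem then gives the contradiction. Your worry about Mooney's hypotheses is settled as in the paper: by $2$-homogeneity, $\widetilde u=m^{-1/2}u$ satisfies $\sigma_2(D^2\widetilde u)\ge1$ in the viscosity sense on $B_R$, which is the hypothesis of \cite[Theorem~1.1]{Mooney}, so only the lower bound $f\ge m$ is used.
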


\begin{proof}
Assume for contradiction that \(D^2w(y_0)\) is singular at some
\(y_0\in B_{\kappa R/8}\).  Proposition~\ref{degeneracy} gives a
neighborhood \(V\) of \(y_0\), a fixed orthogonal projection \(P\) of
rank \(n-1\), and constants \(a,b\) such that \eqref{cylinder} holds.
After shrinking \(V\), assume that \(V\Subset B_{\kappa R/4}.\)
By Proposition~\ref{locdual}, for every \(y\in V\), the supremum in
\eqref{dualdef} has a unique interior maximizer
\[
x(y)=Dw(y)\in B_R.
\]
Fenchel equality gives
\[
w(y)+\Phi(x(y))=x(y)\cdot y,
\]
and hence
\[
y\in\partial\Phi(x(y)).
\]

We record explicitly the corresponding subgradient of \(u\).  Fix
\(y\in V\), put \(x=x(y)\), and define
\[
 p=y-\kappa x.
\]
For \(z\in B_R\), set \(d=z-x\).  If \(t>0\) is small enough, then
\(x+td\in B_R\), and by the subgradient inequality for \(\Phi\),
\[
 u(x+td)-u(x)
 \ge
 t(y-\kappa x)\cdot d-\frac{\kappa}{2}t^2|d|^2.
\]
After division by \(t\) and passage to the limit \(t\downarrow0\),
\[
 u'(x;d)\ge p\cdot d.
\]
The secant slopes of a convex function are nondecreasing, so
\[
 u(z)-u(x)\ge u'(x;d)\ge p\cdot(z-x).
\]
Consequently,
\begin{equation}
 y-\kappa x(y)\in\partial u(x(y))
 \quad\text{relative to }B_R.
 \label{subgrad}
\end{equation}
Let \(e\) be a unit vector spanning \(\ker P\), so
\(P=I-e\otimes e\).  Choose \(\bar y\in V\), set
\[
 c_0=e\cdot\bar y,
\]
and consider the relatively open slice
\[
 S
 =
 V\cap\{y:e\cdot y=c_0\}.
\]
After absorbing the fixed vector \(Py_0\) into the affine term, the
gradient of \eqref{cylinder} can be written
\[
 x(y)=Dw(y)=\widetilde b+\kappa^{-1}Py.
\]
The restriction of \(P\) to
\(\{e\cdot y=c_0\}\) is an affine isomorphism onto \(e^\perp\).
Therefore
\[
 \Sigma:=Dw(S)
\]
is a relatively open \((n-1)\)-dimensional subset of the affine
hyperplane \(\widetilde b+e^\perp\).  The fact that the maximizers are
interior implies \(\Sigma\subset B_R\).

For \(y\in S\),
\[
 y-\kappa x(y)
 =
 y-\kappa\widetilde b-Py
 =
 c_0e-\kappa\widetilde b=:p_*.
\]
By \eqref{subgrad}, the same vector \(p_*\) belongs
to \(\partial u(x)\) for every \(x\in\Sigma\).  If
\(x_1,x_2\in\Sigma\), the two subgradient inequalities give
\[
 \begin{split}
 u(x_2)&\ge u(x_1)+p_*\cdot(x_2-x_1),\\
 u(x_1)&\ge u(x_2)+p_*\cdot(x_1-x_2),
 \end{split}
\]
and hence equality holds.  Fix \(x_*\in\Sigma\) and put
\[
 L(x)=u(x_*)+p_*\cdot(x-x_*).
\]
Then \(L\) supports \(u\) throughout \(B_R\), and
\begin{equation}
 \Sigma\subset\{x\in B_R:u(x)=L(x)\}.
 \label{patch}
\end{equation}
Thus the contact set has dimension at least \(n-1\).

Set
\[
 \widetilde u=m^{-1/2}u,
 \qquad
 \widetilde L=m^{-1/2}L.
\]
By the homogeneity of degree two of \(\sigma_2\),
\begin{equation*}
 \sigma_2(D^2\widetilde u)\ge1
 \qquad\text{in }B_R
\end{equation*}
in the viscosity sense.  Indeed, if a \(2\)-convex function
\(\varphi\in C^2\) touches \(\widetilde u\) from above, then
\(\sqrt m\,\varphi\) touches \(u\) from above and
\[
 m\,\sigma_2(D^2\varphi)
 =
 \sigma_2(D^2(\sqrt m\,\varphi))
 \ge f\ge m.
\]
By Mooney's strict \(2\)-convexity theorem
\cite[Theorem~1.1]{Mooney}, applied to
\((\widetilde u,\widetilde L)\),
we obtain
\[
 \dim\{x\in B_R:\widetilde u(x)=\widetilde L(x)\}\le n-2.
\]
This contradicts \eqref{patch}.  Hence
the assumed singular point cannot exist.
\end{proof}

\section{Completion of the qualitative proof}

We have proved that the Hessian of the localized dual is positive
definite.  We now invert the transform and finish the qualitative
part of Theorem~\ref{main}.

\begin{lemma}
\label{inverse}
Let \(g:\mathbb R^n\to(-\infty,+\infty]\) be proper, closed, and
convex, and let \(w=g^*\).  Suppose that
\(w\in C^{2,\alpha}\) near \(y_0\), and set
\(x_0=Dw(y_0).\)
If \(D^2w(y_0)>0\), then there are neighborhoods \(V\ni y_0\) and
\(U\ni x_0\) such that
\(Dw:V\rightarrow U\)
is a \(C^{1,\alpha}\) diffeomorphism.  If
\(\Psi=(Dw)^{-1}\), then
\begin{equation}
 g(x)=x\cdot\Psi(x)-w(\Psi(x)),
 \qquad
 Dg(x)=\Psi(x),
 \label{local-conj}
\end{equation}
and
\begin{equation}
 D^2g(x)
 =
 \bigl[D^2w(\Psi(x))\bigr]^{-1}.
 \label{inv-hess}
\end{equation}
In particular, \(g\in C^{2,\alpha}(U)\).

Suppose, in addition, that \(u\in C(\overline{B_R})\) is convex and
\(\kappa>0\), and set
\[
 g=\Phi+I_{\overline{B_R}},
 \qquad
 \Phi=u+\frac{\kappa}{2}|x|^2,
 \qquad
 w=g^*,
\]
where \(I_{\overline{B_R}}\) is zero on \(\overline{B_R}\) and
\(+\infty\) outside it.  If \(x_0=Dw(y_0)\in B_R\), then
\(u\in C^{2,\alpha}\) near
\(x_0\), with
\begin{equation*}
 D^2u(x)
 =
\bigl[D^2w(\Psi(x))\bigr]^{-1}-\kappa I.
\end{equation*}
\end{lemma}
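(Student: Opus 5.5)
The proof has two parts: a local inversion of \(Dw\) via the inverse function theorem, followed by the Fenchel identification of \(g\) with the Legendre transform of \(w\). For the first part, \(Dw\in C^{1,\alpha}\) near \(y_0\) and \(D(Dw)(y_0)=D^2w(y_0)\) is invertible. The inverse function theorem (in its \(C^{1,\alpha}\) form) then gives neighborhoods \(V\ni y_0\), \(U\ni x_0\) with \(Dw:V\to U\) a \(C^{1}\) diffeomorphism. The inverse \(\Psi\) has \(D\Psi(x)=[D^2w(\Psi(x))]^{-1}\). This is a composition of the Hölder map \(D^2w\), the \(C^1\) map \(\Psi\), and matrix inversion, which is smooth near invertible matrices. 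Hence \(D\Psi\in C^{0,\alpha}\) after shrinking \(V\) so that \(D^2w\) stays uniformly positive definite there, and so \(\Psi\in C^{1,\alpha}\).

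Next I identify \(g\) on \(U\). Since \(g\) is proper, closed and convex, Fenchel--Moreau gives \(g=w^{**}\). Moreover, \(y\in\partial g(x)\) holds iff \(x\in\partial w(y)\), and this is equivalent to the Fenchel equality \(g(x)+w(y)=x\cdot y\). For \(x\in U\), put \(y=\Psi(x)\). Because \(w\) is differentiable at \(y\) with \(Dw(y)=x\), we have \(x\in\partial w(y)\), so \(g(x)=x\cdot\Psi(x)-w(\Psi(x))\) and \(\Psi(x)\in\partial g(x)\). The point needing care is the claim \(Dg(x)=\Psi(x)\): we must show \(\partial g(x)\) is the single point \(\Psi(x)\). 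The easiest route is to differentiate the formula just obtained. Its right-hand side is \(C^1\) on \(U\), with derivative \(\Psi+ (D\Psi)^T x-(D\Psi)^T Dw(\Psi)=\Psi\). So \(g\) agrees on the open set \(U\) with a \(C^1\) function, hence is differentiable there with gradient \(\Psi\). In particular \(g<\infty\) on \(U\), so \(U\subset\operatorname{dom} g\). Consequently \(D^2g=D\Psi=[D^2w\circ\Psi]^{-1}\in C^{0,\alpha}(U)\), which proves \eqref{inv-hess} and \(g\in C^{2,\alpha}(U)\).

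For the addendum, the function \(g=\Phi+I_{\overline{B_R}}\) is proper, closed and convex, since \(u\) is continuous on the compact ball, and \(w=g^*\) is exactly \eqref{dualdef}. Apply the first part with \(x_0\in B_R\), shrinking \(U\subset B_R\). On \(U\) the indicator vanishes, so \(g=u+\frac\kappa2|x|^2\). Hence \(u=g-\frac\kappa2|x|^2\in C^{2,\alpha}(U)\) and \(D^2u=[D^2w(\Psi)]^{-1}-\kappa I\).

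The main obstacle is the rigorous justification of the Fenchel identities for extended-valued \(g\): that \(g=w^*{}^*\) and that the subgradient correspondence holds. Both are standard convex analysis once closedness and properness are checked. The Hölder regularity of the inverse is routine.
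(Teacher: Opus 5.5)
Your proposal is correct and matches the paper's proof step for step: the $C^{1,\alpha}$ inverse function theorem for $Dw$ with $D\Psi=[D^2w\circ\Psi]^{-1}$, the Fenchel equality at $y=\Psi(x)$ giving $g(x)=x\cdot\Psi(x)-w(\Psi(x))$, differentiating this $C^1$ expression to get $Dg=\Psi$ and $D^2g=D\Psi$, and then shrinking $U$ into $B_R$ so that the indicator vanishes. One notational slip: Fenchel--Moreau gives $w^*=g^{**}=g$, not $g=w^{**}$ (which would mean $g=w$); the fact you actually use, $x\in\partial w(y)\Rightarrow g(x)+w(y)=x\cdot y$, is the correct one.
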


\begin{proof}
Continuity and strict positivity of \(D^2w(y_0)\) give, after
shrinking the neighborhood,
\[
 D^2w\ge\lambda_0 I
\]
for some \(\lambda_0>0\).  The \(C^{1,\alpha}\) inverse function
theorem applied to \(Dw\) provides neighborhoods \(V,U\) and a
\(C^{1,\alpha}\) inverse \(\Psi\), with
\begin{equation}
 D\Psi(x)
 =
 \bigl[D^2w(\Psi(x))\bigr]^{-1}.
 \label{inv-map}
\end{equation}

For \(x\in U\), put \(y=\Psi(x)\).  Then
\(x=Dw(y)\in\partial w(y)\).  Since \(w=g^*\), by Fenchel duality,
\[
 g(x)+w(y)=x\cdot y.
\]
This proves the first identity in
\eqref{local-conj}.  The function on its
right-hand side is \(C^{1,\alpha}\).  Differentiating and using
\(Dw(\Psi(x))=x\), we obtain
\[
 \begin{split}
 Dg(x)
 &=
 \Psi(x)+(D\Psi(x))^Tx
 -(D\Psi(x))^TDw(\Psi(x))\\
 &=\Psi(x).
 \end{split}
\]
We differentiate once more and use
\eqref{inv-map} to obtain
\eqref{inv-hess}.  Because matrix inversion is smooth
on the positive definite cone bounded away from its boundary, the
right-hand side is \(C^{0,\alpha}\), with no loss of exponent.

For the localized statement, the extension \(g\) is proper, closed,
and convex, while
\(w=g^*\) and \(w^*=g\) by the Fenchel--Moreau theorem.
By the general part just proved,
\(g\in C^{2,\alpha}\) near \(x_0\).  Since \(x_0\in B_R\), that
neighborhood may be reduced so that \(g=\Phi\) there.  Subtracting
\(\kappa|x|^2/2\) and using
\eqref{inv-hess} proves the result.

\end{proof}

\begin{proof}[Proof of Theorem~\ref{main}]
Fix an arbitrary point \(x_0\in\Omega\), and translate \(x_0\) to the
origin.  Since \(u\) is
convex, it has a supporting affine function at the origin.  After
subtracting this function, we have
\begin{equation}
 u(0)=0,\qquad 0\in\partial u(0),\qquad u\ge0
 \label{normalize}
\end{equation}
on a neighborhood of the origin.

First choose \(R_0>0\) with
\(\overline{B_{2R_0}}\subset\Omega\).  Since \(f(0)>0\) and \(f\) is
continuous, after reducing \(R_0\) we may fix
\[
 m=\frac12f(0),\qquad M=2f(0),
 \qquad
 m\le f\le M
 \quad\text{on }B_{2R_0}.
\]
The scale-invariant oscillation tends to zero:
\[
 R^\alpha[f]_{C^{0,\alpha}(B_{2R})}\longrightarrow0,
 \qquad R\downarrow0.
\]
We may therefore choose \(0<R\le R_0\) so that the smallness hypothesis
\eqref{small} of
Proposition~\ref{regularity} holds.

Define \(c,\kappa,\Phi\), and \(w\) by
\eqref{kappa} and
\eqref{dualdef}.  Condition
\eqref{normalize} makes \(x=0\) the unique maximizer in
the definition of \(w(0)\). Hence
\(Dw(0)=0.\)
By Propositions~\ref{regularity}
and~\ref{positive},
\(
 w\in C^{2,\alpha}(B_{\kappa R/8}),
 D^2w(0)>0.
\)
The maximizer \(Dw(0)=0\) lies in \(B_R\).  Applying the localized
form of Lemma~\ref{inverse} at \(y_0=0\), we obtain
\(u\in C^{2,\alpha}\)
on a neighborhood of the original point \(x_0\).  Since \(x_0\) was
arbitrary, \(u\in C_{\mathrm{loc}}^{2,\alpha}(\Omega)\).

\end{proof}

\section{Quantitative analysis}

Sections~2--5 exclude a singular dual Hessian for each fixed solution.
We now obtain a lower bound at the center of each localized dual
transform which is uniform over every family satisfying fixed data
bounds.
Throughout this section,
\(\|f\|_{C^{0,\alpha}}\) denotes the full norm, including
\(\|f\|_{L^\infty}\).

\begin{proposition}
\label{lower-bound}
Fix
\[
 n\ge2,\qquad 0<\alpha<1,\qquad
 K<\infty,\qquad 0<\lambda\le L<\infty.
\]
Let
\[
 \varepsilon_*=\varepsilon_*(n,\alpha,\lambda,L)
\]
be the smallness constant in
Proposition~\ref{regularity}, and set
\begin{equation*}
 R_*=
 \min\left\{
 \frac14,
 \left(\frac{\varepsilon_*}{2L}\right)^{1/\alpha}
 \right\},
 \qquad
 \kappa_-=
 \left(\frac{\lambda}{\binom n2}\right)^{1/2},
 \qquad
 \rho_*=\frac{\kappa_-R_*}{32}.
\end{equation*}
Then there is
\[
 \mu=\mu(n,\alpha,K,\lambda,L)>0
\]
with the following property.

Let \(u\in C(B_2)\) be convex and solve
\[
 \sigma_2(D^2u)=f
 \qquad\text{in }B_2
\]
in the admissible viscosity sense, where
\begin{equation}
 \|u\|_{L^\infty(B_2)}\le K,\qquad
 f\ge\lambda,\qquad
 \|f\|_{C^{0,\alpha}(B_2)}\le L.
 \label{data}
\end{equation}
For \(x_0\in B_{1/2}\) and \(p\in\partial u(x_0)\), define
\begin{equation}
 U_{x_0,p}(z)
 =
 u(x_0+z)-u(x_0)-p\cdot z,
 \qquad
 F_{x_0}(z)=f(x_0+z),
 \label{normalized-u}
\end{equation}
\begin{equation*}
 c_{x_0}=\min_{\overline{B_{R_*}}}F_{x_0},
 \qquad
 \binom n2\kappa_{x_0}^2=c_{x_0},
\end{equation*}
and
\begin{equation}
 w_{x_0,p}(y)
 =
 \sup_{z\in\overline{B_{R_*}}}
 \left\{
 z\cdot y-U_{x_0,p}(z)
 -\frac{\kappa_{x_0}}2|z|^2
 \right\}.
 \label{family-w}
\end{equation}
Then
\begin{equation}
 D^2w_{x_0,p}(0)\ge\mu I,
 \label{mu}
\end{equation}
uniformly over all solutions, base points, and supporting vectors in
the displayed class.
\end{proposition}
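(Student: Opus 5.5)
The proof is by contradiction and compactness. The idea is to transfer the pointwise conclusion of Proposition~\ref{positive} to a limiting solution, using the uniform $C^{2,\alpha}$ estimate \eqref{scaled} to make the Hessians of the localized duals converge.

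\textbf{Setup.} Suppose no such $\mu$ exists. Then there are solutions $u_j$ with data $f_j$ satisfying \eqref{data}, base points $x_j\in B_{1/2}$ and supporting vectors $p_j\in\partial u_j(x_j)$ such that the smallest eigenvalue of $D^2w_j(0)$ tends to zero, where $w_j=w_{x_j,p_j}$.

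First I check that every member of the family satisfies the hypotheses of Sections~2--3 with $R=R_*$, $m=\lambda$ and $M=L$.
\begin{itemize}
\item Since $R_*\le 1/4$ and $x_0\in B_{1/2}$, we have $B_{2R_*}(x_0)\subset B_1$.
\item The normalization \eqref{norm} holds for $U_{x_0,p}$, because $p\in\partial u(x_0)$ gives $U_{x_0,p}\ge0$ with $U_{x_0,p}(0)=0$.
\item The smallness condition \eqref{small} holds: $R_*^\alpha[F_{x_0}]_{C^{0,\alpha}(B_{2R_*})}\le R_*^\alpha L\le\varepsilon_*/2$.
\item The constants satisfy $\kappa_-\le\kappa_{x_0}\le\kappa_+$.
\end{itemize}
Proposition~\ref{regularity} then gives $w_{x_0,p}\in C^{2,\alpha}(B_{\kappa_{x_0}R_*/8})$ with a bound depending only on $n,\alpha,\lambda,L$. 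This bound holds on the fixed ball $B_{4\rho_*}\subset B_{\kappa_{x_0}R_*/8}$.

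\textbf{Extracting limits.} Convexity and $\|u_j\|_{L^\infty(B_2)}\le K$ give a uniform Lipschitz bound for $u_j$ on $B_{3/2}$; in particular $|p_j|\le C(K)$. After passing to a subsequence:
\begin{itemize}
\item $x_j\to x_\infty\in\overline{B_{1/2}}$ and $p_j\to p_\infty$;
\item $u_j\to u_\infty$ locally uniformly on $B_2$;
\item $f_j\to f_\infty$ locally uniformly, by Arzel\`a--Ascoli.
\end{itemize}
The limit satisfies $f_\infty\ge\lambda$ and $\|f_\infty\|_{C^{0,\alpha}}\le L$, since the H\"older seminorm is lower semicontinuous under uniform convergence. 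The limit $u_\infty$ is convex, and $p_\infty\in\partial u_\infty(x_\infty)$ because the graph of the subdifferential is closed. By stability of viscosity solutions, $u_\infty$ is an admissible viscosity solution of $\sigma_2(D^2u_\infty)=f_\infty$. The point $x_\infty$ may lie on $\partial B_{1/2}$, but this is harmless: the localization only uses $B_{2R_*}(x_\infty)\subset B_1$.

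\textbf{Identifying the limit dual.} The normalized functions $U_j=U_{x_j,p_j}$ converge uniformly on $\overline{B_{R_*}}$ to $U_\infty=U_{x_\infty,p_\infty}$. The minima $c_{x_j}$ converge to $\min_{\overline{B_{R_*}}}f_\infty(x_\infty+\cdot)$, so $\kappa_{x_j}\to\kappa_\infty$. A supremum over the fixed compact set $\overline{B_{R_*}}$ is $1$-Lipschitz in the sup norm of its integrand. Hence $w_j\to w_\infty$ uniformly near $0$, where $w_\infty$ is exactly the localized dual \eqref{family-w} built from $(u_\infty,x_\infty,p_\infty)$.

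The uniform $C^{2,\alpha}(B_{4\rho_*})$ bounds let me upgrade this convergence to $C^2$ convergence on $\overline{B_{2\rho_*}}$. Consequently $D^2w_j(0)\to D^2w_\infty(0)$, and the limit matrix has a zero eigenvalue.

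\textbf{Contradiction.} The limit triple satisfies all hypotheses of Proposition~\ref{regularity}, with the same $m=\lambda$, $M=L$ and $R=R_*$. Proposition~\ref{positive} therefore gives $D^2w_\infty(0)>0$, which is a contradiction.

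The step I expect to need the most care is identifying the limit of the $w_j$ with the localized dual of the limiting data. The parameters $\kappa_{x_j}$ and the normalizations both vary along the sequence, so one must check that the dual of the limit is the limit of the duals, and that the limit data still meet \eqref{small} and \eqref{fbounds}. The convergence of $\kappa_{x_j}$ and the lower semicontinuity of the H\"older seminorm are what make this work.
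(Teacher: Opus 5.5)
Your proposal is correct and follows essentially the same route as the paper. The shared steps are: a contradiction sequence; uniform verification of the hypotheses of Proposition~\ref{regularity} with $m=\lambda$, $M=L$, $R=R_*$; Arzel\`a--Ascoli extraction of $(u_j,f_j,x_j,p_j)$ together with viscosity stability; uniform convergence of the localized conjugates, including $\kappa_{x_j}\to\kappa_\infty$; the upgrade to convergence of $D^2w_j(0)$ via the uniform estimate \eqref{scaled}; and a contradiction with Proposition~\ref{positive} for the limit. The only difference is cosmetic: the paper identifies the uniform limit of $D^2w_j$ with $D^2w_\infty$ through stability of the maximizers $Dw_j\to Dw_\infty$, whereas you get it from the uniform $C^{2,\alpha}$ bounds and uniqueness of limits, which is equally valid.
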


\begin{proof}
The conversion of qualitative strictness into a uniform modulus is a
compactness argument in the same broad spirit as
\cite{McGonagleSongYuan,Mooney}.  We first fix the localization
uniformly and obtain compactness of the
original functions.
For every unit vector \(e\), the points \(x_0\pm e\) lie in \(B_2\).
The two supporting inequalities give
\[
 |p\cdot e|\le2K,
\]
and hence
\begin{equation}
 |p|\le2K.
 \label{slopes}
\end{equation}
Thus \(U_{x_0,p}(0)=0\), \(0\in\partial U_{x_0,p}(0)\),
\(U_{x_0,p}\ge0\), and
\begin{equation*}
 0\le U_{x_0,p}(z)
 \le2K+2K|z|
 \le\frac92K
 \qquad(|z|<5/4).
\end{equation*}
Since \(R_*\le1/4\),
\[
 B_{2R_*}(x_0)\subset B_1.
\]
Moreover,
\begin{equation*}
 \kappa_-\le\kappa_{x_0}\le
 \kappa_+:=
 \left(\frac{L}{\binom n2}\right)^{1/2},
 \qquad
 R_*^\alpha[F_{x_0}]_{C^{0,\alpha}(B_{2R_*})}
 \le\frac{\varepsilon_*}{2}.
\end{equation*}
Proposition~\ref{regularity} therefore applies uniformly.
In particular,
\[
 \overline{B_{2\rho_*}}
 \Subset B_{\kappa_{x_0}R_*/8}
\]
with a uniform positive margin.

We now pass to a compactness argument for the translated functions
and right-hand sides.
If \eqref{mu} were false, there would be
\[
 (u_j,f_j,x_j,p_j),
\qquad
 x_j\in B_{1/2},\quad
 p_j\in\partial u_j(x_j),
\]
satisfying \eqref{data}, such that
\begin{equation}
 \lambda_{\min}\bigl(D^2w_j(0)\bigr)\longrightarrow0.
 \label{collapse}
\end{equation}
Convexity and the amplitude bound make \(u_j\) uniformly Lipschitz on
\(\overline{B_{3/2}}\).  Indeed, for
\(q\in\partial u_j(x)\), \(x\in B_{3/2}\), evaluate the supporting
inequality at \(x+tq/|q|\) and let \(t\uparrow1/2\) to get
\[
 |q|\le4K.
\]
The functions \(f_j\) have a common H\"older modulus on the same
compact ball.  Using also \eqref{slopes}, we pass to a
subsequence such that
\begin{equation*}
 \begin{split}
 u_j&\longrightarrow u_\infty
 \quad\text{uniformly on }\overline{B_{3/2}},\\
 f_j&\longrightarrow f_\infty
 \quad\text{uniformly on }\overline{B_{3/2}},\\
 x_j&\longrightarrow x_\infty,\qquad
 p_j\longrightarrow p_\infty.
 \end{split}
\end{equation*}
The supporting inequalities pass to the limit, so
\[
 p_\infty\in\partial u_\infty(x_\infty).
\]

On \(B_1\), put
\[
 U_j(z)=u_j(x_j+z)-u_j(x_j)-p_j\cdot z,
 \qquad
 F_j(z)=f_j(x_j+z).
\]
Then, uniformly on \(\overline{B_1}\),
\begin{equation*}
 U_j\longrightarrow U_\infty,\qquad
 F_j\longrightarrow F_\infty,
\end{equation*}
where
\[
 U_\infty(z)
 =
 u_\infty(x_\infty+z)-u_\infty(x_\infty)-p_\infty\cdot z,
 \qquad
 F_\infty(z)=f_\infty(x_\infty+z).
\]
The limit is convex and satisfies
\[
 U_\infty(0)=0,\qquad
 0\in\partial U_\infty(0),\qquad
 U_\infty\ge0,
\]
while
\begin{equation*}
 F_\infty\ge\lambda,\qquad
 \|F_\infty\|_{L^\infty(B_1)}\le L,\qquad
 [F_\infty]_{C^{0,\alpha}(B_1)}\le L.
\end{equation*}

By viscosity stability,
\begin{equation*}
 \sigma_2(D^2U_\infty)=F_\infty
 \qquad\text{in }B_1.
\end{equation*}
To verify this within the admissible convention, make an upper contact
strict by adding a small positive quadratic term and a lower contact
strict by subtracting one.  Uniform convergence produces nearby
contact points for \(U_j\), and the openness of \(\Gamma_2\) preserves
the lower test's admissibility.  If lower tests on
\(\partial\Gamma_2\) are included in the convention, the desired
inequality is automatic there because
\(\sigma_2(D^2\phi)=0\le F_\infty\).  Passing first \(j\to\infty\) and
then the quadratic perturbation to zero proves both inequalities.

The parameters in the quadratic perturbations also converge.
Let
\[
 c_j=\min_{\overline{B_{R_*}}}F_j,\qquad
 \binom n2\kappa_j^2=c_j.
\]
By uniform convergence on the fixed compact ball,
\begin{equation*}
 c_j\longrightarrow
 c_\infty:=\min_{\overline{B_{R_*}}}F_\infty,
 \qquad
 \kappa_j\longrightarrow
 \kappa_\infty:=
 \left(\frac{c_\infty}{\binom n2}\right)^{1/2}.
\end{equation*}
In particular,
\[
 \kappa_-\le\kappa_\infty\le\kappa_+,
 \qquad
 c_\infty=\binom n2\kappa_\infty^2.
\]

We next pass to the localized Legendre transforms.
Define, on \(\overline{B_{R_*}}\),
\[
 \Phi_j(z)=U_j(z)+\frac{\kappa_j}{2}|z|^2,
 \qquad
 \Phi_\infty(z)=U_\infty(z)+\frac{\kappa_\infty}{2}|z|^2,
\]
and let \(w_j,w_\infty\) be their restricted conjugates.  Then
\begin{equation}
 \sup_{y\in\R^n}|w_j(y)-w_\infty(y)|
 \le
 \|U_j-U_\infty\|_{L^\infty(B_{R_*})}
 +\frac{R_*^2}{2}|\kappa_j-\kappa_\infty|
 \longrightarrow0.
 \label{conj-stab}
\end{equation}
Thus \(w_\infty\) is exactly the localized conjugate of
\((U_\infty,\kappa_\infty)\).

If \(\xi_j(y)\) and \(\xi_\infty(y)\) are the unique maximizers, the
uniform \(\kappa_-\)-strong convexity of
\(\Phi_j-y\cdot z\) implies, with the right-hand side of
\eqref{conj-stab} denoted by \(\delta_j\),
\begin{equation}
 \frac{\kappa_-}{2}
 |\xi_j(y)-\xi_\infty(y)|^2
 \le2\delta_j.
 \label{max-stab}
\end{equation}
For \(y\in\overline{B_{2\rho_*}}\), the boundary comparison
\[
 z\cdot y-U_j(z)-\frac{\kappa_j}{2}|z|^2
 \le R_*|y|-\frac{\kappa_jR_*^2}{2}<0
 \qquad(|z|=R_*)
\]
keeps all maximizers in the interior.  Hence
\[
 Dw_j=\xi_j\longrightarrow\xi_\infty=Dw_\infty
\]
uniformly on the common dual ball.

We next prove strong convergence of the dual Hessians.
By the scaled estimate \eqref{scaled}, with
\(m=\lambda\), \(M=L\),
\begin{equation*}
 \|D^2w_j\|_{L^\infty(B_{2\rho_*})}
 +[D^2w_j]_{C^{0,\alpha}(B_{2\rho_*})}
 \le C_{\mathrm d},
\end{equation*}
where
\[
 C_{\mathrm d}
 =
 C_{\mathrm d}(n,\alpha,\lambda,L)<\infty.
\]
By the Arzel\`a--Ascoli theorem, after passing to a subsequence,
\[
 D^2w_j\longrightarrow H
 \quad\text{uniformly on }\overline{B_{2\rho_*}}
\]
for a continuous matrix field \(H\).
Since \(\Phi_j\) has its unique minimum at zero,
\[
 w_j(0)=0,\qquad Dw_j(0)=0.
\]
For every segment
\([y,y+h]\Subset B_{2\rho_*}\),
\[
 Dw_j(y+h)-Dw_j(y)
 =
 \int_0^1D^2w_j(y+th)h\,dt.
\]
Pass to the limit using
\eqref{max-stab}.  This proves
\[
 H=D^2w_\infty
\]
and hence
\begin{equation}
 D^2w_j\longrightarrow D^2w_\infty
 \quad\text{uniformly on }\overline{B_{2\rho_*}}.
 \label{hess-conv}
\end{equation}
The common H\"older estimate also implies
\(w_\infty\in C^{2,\alpha}(B_{2\rho_*})\).
Equations \eqref{collapse} and
\eqref{hess-conv} now imply
\begin{equation}
 \lambda_{\min}\bigl(D^2w_\infty(0)\bigr)=0.
 \label{limit-sing}
\end{equation}

We now apply the qualitative result to the limit.
The limiting pair \((U_\infty,F_\infty)\) satisfies all the hypotheses
of Proposition~\ref{regularity} on \(B_{2R_*}\), with Lewy
parameter \(\kappa_\infty\).  Consequently
Proposition~\ref{positive} applies to its localized
conjugate \(w_\infty\).  Since
\(0\in B_{\kappa_\infty R_*/8}\), that proposition gives
\(D^2w_\infty(0)>0\), contradicting
\eqref{limit-sing}.

It follows that the infimum of
\(\lambda_{\min}(D^2w_{x_0,p}(0))\) over the class in the statement is
positive.  Taking this infimum, or one half of it, as \(\mu\) proves
\eqref{mu}.  Every compactness bound and every
fixed radius used above depends only on
\((n,\alpha,K,\lambda,L)\).
\end{proof}

The amplitude bound in
Proposition~\ref{lower-bound} cannot be omitted.
Let
\[
 A_t=\operatorname{diag}(t,t^{-1},0,\ldots,0),
 \qquad
 u_t(x)=\frac12x^TA_tx.
\]
Then \(u_t\) is convex and
\[
 \sigma_2(D^2u_t)=1,
\]
but, for the local Legendre--Lewy dual at the origin,
\[
 D^2w_t(0)=(A_t+\kappa I)^{-1},
 \qquad
 \lambda_{\min}(D^2w_t(0))
 =(t+\kappa)^{-1}\longrightarrow0.
\]
At the same time,
\(\|u_t\|_{L^\infty(B_2)}\) grows like \(t\).

\begin{proof}[Proof of the estimate in Theorem~\ref{main}]
Set
\[
 K=\|u\|_{L^\infty(B_2)},\qquad
 \lambda=\|f^{-1}\|_{L^\infty(B_2)}^{-1},\qquad
 L=\|f\|_{C^{0,\alpha}(B_2)}.
\]
Then \(0<\lambda\le L\).
Let \(\mu\) be given by
Proposition~\ref{lower-bound}.  The qualitative part
of the theorem, already proved in Section~5, implies that
\(u\in C^{2,\alpha}(B_2)\).

Fix \(x_0\in B_{1/2}\) and set \(p=Du(x_0)\).  For the functions in
\eqref{normalized-u}--\eqref{family-w}, the
unique maximizer defining \(w_{x_0,p}(0)\) is \(z=0\).  Thus
\(Dw_{x_0,p}(0)=0\).  By the second-order duality relation
\eqref{hessrel} and
\eqref{mu},
\[
 0\le D^2u(x_0)
 =
 \bigl[D^2w_{x_0,p}(0)\bigr]^{-1}
 -\kappa_{x_0}I
 \le\mu^{-1}I.
\]
Since \(x_0\) was arbitrary,
\begin{equation}
 \|D^2u\|_{L^\infty(B_{1/2})}\le\mu^{-1}.
 \label{u-hess}
\end{equation}

It remains to estimate the H\"older seminorm of \(D^2u\).  Set
\[
 \Lambda_0=\mu^{-1},
 \qquad
 F(A)=\sqrt{\sigma_2(A)},
 \qquad
 g=\sqrt f.
\]
In particular, \(\|g\|_{C^{0,\alpha}(B_{1/2})}\) is bounded in terms
of \(\lambda\) and \(L\).
Consider the compact set
\[
 \mathcal H
 =
 \{A\in\Sym(n):0\le A\le\Lambda_0I,
 \ \lambda\le\sigma_2(A)\le L\}.
\]
It is contained in \(\Gamma_2\), and \(F\) is smooth and concave on
a fixed neighborhood of \(\mathcal H\); see, for example,
\cite[Section~1]{CNS}.  To record uniform
ellipticity, diagonalize \(A\in\mathcal H\), let
\(a_1,\ldots,a_n\) be its eigenvalues, and put
\(s_i=\sum_{j\ne i}a_j\).  Then
\[
 F_i(A)=\frac{s_i}{2\sqrt{\sigma_2(A)}}.
\]
Moreover,
\[
 \sigma_2(A)
 =a_is_i+\sigma_2(a|i)
 \le\Lambda_0s_i+\frac12s_i^2
 \le\frac{n+1}{2}\Lambda_0s_i.
\]
Consequently,
\begin{equation}
 \frac{\lambda}{(n+1)\Lambda_0\sqrt L}
 \le F_i(A)
 \le\frac{(n-1)\Lambda_0}{2\sqrt\lambda}.
 \label{ellipticity}
\end{equation}

Define a concave extension of \(F\) by
\[
 \overline F(M)
 =
 \inf_{A\in\mathcal H}
 \{F(A)+DF(A):(M-A)\}.
\]
By \eqref{ellipticity}, \(\overline F\) is
uniformly elliptic on \(\Sym(n)\), with constants depending only on
\(n,\lambda,L\), and \(\mu\).  Concavity of \(F\) implies that
\(\overline F=F\) on \(\mathcal H\).  Hence
\[
 \overline F(D^2u)=g
 \qquad\text{in }B_{1/2}.
\]
By the Evans--Krylov estimate \cite{Evans,Krylov}, \(u\) has a uniform
\(C^{2,\gamma_0}\) bound on a smaller ball, where
\(\gamma_0>0\) depends only on the ellipticity constants.

We now use the standard nonlinear Schauder bootstrap.  If
\(x_0\in B_{3/8}\), \(A_0=D^2u(x_0)\), and \(L_0=DF(A_0)\), then
smoothness of \(F\) near \(\mathcal H\) gives
\[
 \bigl|
 L_0:(D^2u-A_0)-\bigl(F(D^2u)-F(A_0)\bigr)
 \bigr|
 \le C|D^2u-A_0|^2.
\]
The constant-coefficient Schauder estimate improves a
\(C^{0,\gamma}\) modulus of \(D^2u\) to a
\(C^{0,\min\{\alpha,2\gamma\}}\) modulus.  Iteration on nested balls,
with the endpoint Campanato estimate, reaches the exponent \(\alpha\).
The standard interior estimate for concave fully nonlinear equations
\cite[Chapters~6 and~8]{CaffarelliCabre} therefore gives
\[
 \|D^2u\|_{C^{0,\alpha}(B_{1/4})}
 \le C(n,\alpha,K,\lambda,L).
\]
Convexity and the amplitude bound, tested at \(x\pm e\) for unit
vectors \(e\), also give
\[
 \|Du\|_{L^\infty(B_{1/4})}\le2K.
\]
Together with \(\|u\|_{L^\infty(B_{1/4})}\le K\) and
\eqref{u-hess}, the last two estimates prove
\eqref{main-est}.
\end{proof}

\section*{Acknowledgement}
This work initiated from an effective discussion with Ling Wang and Yang Zhou, at the Westlake Young‑Scholars Workshop on PDE, 2026. We sincerely thank them for the helpful thought.


\begin{thebibliography}{99}

\bibitem{CaffarelliMA}
L.~A. Caffarelli,
\emph{Interior \(W^{2,p}\) estimates for solutions of the
Monge--Amp\`ere equation},
Ann. of Math. (2) \textbf{131} (1990), no.~1, 135--150.

\bibitem{CaffarelliCabre}
L.~A. Caffarelli and X.~Cabr\'e,
\emph{Fully Nonlinear Elliptic Equations},
American Mathematical Society Colloquium Publications, vol.~43,
American Mathematical Society, 1995.

\bibitem{CaffarelliGuanMa}
L.~A. Caffarelli, P.~Guan, and X.-N. Ma,
\emph{A constant rank theorem for solutions of fully nonlinear
elliptic equations},
Comm. Pure Appl. Math. \textbf{60} (2007), no.~12, 1769--1791.

\bibitem{CNS}
L.~Caffarelli, L.~Nirenberg, and J.~Spruck,
\emph{The Dirichlet problem for nonlinear second-order elliptic
equations, III: Functions of the eigenvalues of the Hessian},
Acta Math. \textbf{155} (1985), 261--301.

\bibitem{ChangYuan}
S. Y. A. Chang and Y. Yuan,
\emph{A Liouville problem for the sigma-2 equation},
Discrete Contin. Dyn. Syst. \textbf{28} (2010), no.~2, 659--664.

\bibitem{ChenJianTuZhou}
R.~Chen, H.~Jian, X.~Tu, and X.~Zhou,
\emph{Regularity for convex viscosity solutions of
\(\sigma_2\) equation},
preprint, 2026,
\href{https://arxiv.org/abs/2605.30823}{arXiv:2605.30823}.

\bibitem{ChouWang}
K.-S. Chou and X.-J. Wang,
\emph{A variational theory of the Hessian equation},
Comm. Pure Appl. Math. \textbf{54} (2001), no.~9, 1029--1064.

\bibitem{Evans}
L.~C. Evans,
\emph{Classical solutions of fully nonlinear, convex, second-order
elliptic equations},
Comm. Pure Appl. Math. \textbf{35} (1982), no.~3, 333--363.

\bibitem{Fan}
Z.~Fan,
\emph{A generalization of Savin's small perturbation theorem for
fully nonlinear elliptic equations and applications},
Calc. Var. Partial Differential Equations \textbf{65} (2026),
Art.~182,
\href{https://doi.org/10.1007/s00526-026-03349-7}
{doi:10.1007/s00526-026-03349-7}.

\bibitem{FanVariable}
Z.~Fan,
\emph{Hessian estimates for the sigma-2 equation with variable
right-hand side terms in dimension 4},
Adv. Math. \textbf{494} (2026), Art.~110953.

\bibitem{GuanPhong}
P.~Guan and D.~H. Phong,
\emph{Partial Legendre transforms of non-linear equations},
Proc. Amer. Math. Soc. \textbf{140} (2012), no.~11, 3831--3842.

\bibitem{GuanQiu}
P.~Guan and G.~Qiu,
\emph{Interior \(C^2\) regularity of convex solutions to prescribing
scalar curvature equations},
Duke Math. J. \textbf{168} (2019), no.~9, 1641--1663.

\bibitem{JianWang}
H.-Y. Jian and X.-J. Wang,
\emph{Continuity estimates for the Monge--Amp\`ere equation},
SIAM J. Math. Anal. \textbf{39} (2007), no.~2, 608--626.

\bibitem{Krylov}
N.~V. Krylov,
\emph{Boundedly inhomogeneous elliptic and parabolic equations},
Math. USSR Izv. \textbf{20} (1983), no.~3, 459--492.

\bibitem{LiWu}
K.~Wu and Z.~Li, 
\emph{Interior Hessian estimates for the quadratic Hessian equation},arXiv:2608.23233.

\bibitem{LianZhang}
Y.~Lian and K.~Zhang,
\emph{Pointwise regularity for locally uniformly elliptic equations
and applications},
preprint, 2024,
\href{https://arxiv.org/abs/2405.07199}{arXiv:2405.07199}.

\bibitem{McGonagleSongYuan}
M.~McGonagle, C.~Song, and Y.~Yuan,
\emph{Hessian estimates for convex solutions to quadratic Hessian
equation},
Ann. Inst. H. Poincar\'e C Anal. Non Lin\'eaire \textbf{36} (2019),
no.~2, 451--454.

\bibitem{Mooney}
C.~Mooney,
\emph{Strict \(2\)-convexity of convex solutions to the quadratic
Hessian equation},
Proc. Amer. Math. Soc. \textbf{149} (2021), no.~6, 2473--2477.

\bibitem{Qiu}
G.~Qiu,
\emph{Interior Hessian estimates for \(\sigma_2\) equations in
dimension three},
Front. Math. \textbf{19} (2024), no.~4, 577--598.

\bibitem{Savin}
O.~Savin,
\emph{Small perturbation solutions for elliptic equations},
Comm. Partial Differential Equations \textbf{32} (2007), 557--578.

\bibitem{ShankarYuanSemiconvex}
R.~Shankar and Y.~Yuan,
\emph{Hessian estimate for semiconvex solutions to the sigma-2
equation},
Calc. Var. Partial Differential Equations \textbf{59} (2020),
Art.~30.

\bibitem{ShankarYuanAlmostConvex}
R.~Shankar and Y.~Yuan,
\emph{Regularity for almost convex viscosity solutions of the
sigma-2 equation},
J. Math. Study \textbf{54} (2021), no.~2, 164--170.

\bibitem{ShankarYuanFour}
R.~Shankar and Y.~Yuan,
\emph{Hessian estimates for the sigma-2 equation in dimension four},
Ann. of Math. (2) \textbf{201} (2025), no.~2, 489--513.

\bibitem{SzekelyhidiWeinkove}
G.~Sz\'ekelyhidi and B.~Weinkove,
\emph{Weak Harnack inequalities for eigenvalues and constant rank
theorems},
Comm. Partial Differential Equations \textbf{46} (2021), no.~8,
1585--1600.

\bibitem{Urbas1990}
J.~I.~E. Urbas,
\emph{On the existence of nonclassical solutions for two classes of
fully nonlinear elliptic equations},
Indiana Univ. Math. J. \textbf{39} (1990), no.~2, 355--382.

\bibitem{WarrenYuan}
M.~Warren and Y.~Yuan,
\emph{Hessian estimates for the sigma-2 equation in dimension three},
Comm. Pure Appl. Math. \textbf{62} (2009), no.~3, 305--321.

\bibitem{Xu}
Y.~Xu,
\emph{Interior estimates for solutions of the quadratic Hessian
equations in dimension three},
J. Math. Anal. Appl. \textbf{489} (2020), Art.~124179.

\end{thebibliography}
\end{document}